\newtheorem{Th}{Theorem}
\newtheorem{Prop}{Proposition}
\newtheorem{lem}{Lemma}
\newtheorem{rem}{Remark}
\newtheorem{Def}{Definition}
\newtheorem{Cor}{Corollary}
\newtheorem{que}{Question}
\newtheorem*{conj}{Conjecture}
\newtheorem*{thank}{\ \ \ Acknowledgment}
\def\scalar(#1,#2){(#1\mid#2)}
\renewcommand{\hat}{\widehat}
\newcommand{\ca}{\mathcal{A}}
\newcommand{\R}{{\mathbb{R}}}
\newcommand{\C}{{\mathbb{C}}}
\newcommand{\N}{{\mathbb{N}}}
\newcommand{\bml}{\bm \lambda}
\newcommand{\tend}[3][]{\xrightarrow[#2\to#3]{#1}}
\newcommand{\ds}{\displaystyle}
\title[A generalization of Littlewood's $L^\alpha$-flat theorem]{
A generalization of Littlewood's $L^\alpha$-flat theorem, $\alpha>0$}
\author{\lowercase{el} H\lowercase{oucein} \lowercase{el} A\lowercase{bdalaoui}$^\star$}
\address{$^\star$ University of Rouen Normandy ,
	Department of Mathematics, LMRS  UMR 60 85 CNRS\\
	Avenue de l'Universit\'e, BP.12
	76801 Saint Etienne du Rouvray - France .}
\email{elhoucein.elabdalaoui@univ-rouen.fr}
\urladdr{http://www.univ-rouen.fr/LMRS/Persopage/Elabdalaoui/}
\date{\today}
\subjclass[2020]{Primary 11C08, 42A05, 42A55, Secondary 37A05, 37A30, 42A61 }
\dedicatory{}
\keywords{Flat polynomials, Ultraflat polynomials, Erd\"{o}s-Littlewood's problem,
Unimodular polynomials, Generalized Riesz product, Simple Lebesgue spectrum, Singular spectrum, Clarkson's second inequality, Merit factor, Barker sequence, Liouville function, Riemann Hypothesis}
\begin{document}

\begin{abstract}

We establish a generalization of Littlewood’s criterion on $L^\alpha$-flatness by proving that there is no $L^\alpha$-flat polynomials, $\alpha>0$, within the class of analytic polynomials on the unit circle of the form $\ds P_n(z)=\sum_{m=1}^{n}c_m z^m, n \in \N^*,$ satisfying   
$\ds \sum_{m=1}^{n}|c_m|^2 \leq \frac{K}{n^2} \sum_{m=1}^{n}m^2 |c_m|^2,
$
where $K$ is an absolutely constant. As a consequence, we confirm the  $L^\alpha$-Littlewood conjecture, and thereby the  $L^1$-Newman and $L^\infty$-Erd\H{o}s conjectures. Our approach combines the  $L^\alpha$ Littlewood theorem with the generalized Clarkson's second inequality for $L^\alpha(X,\ca,m;B)$, with $B$ a Banach spaces and $1 < \alpha \leq 2.$ 
It follows that there are only finitely many Barker sequences, and we further present several applications in number theory and the spectral theory of dynamical systems. Finally, we construct Gauss–Fresnel polynomials that are Mahler-flat, providing a new proof of the Beller–Newman theorem.

\end {abstract}

\maketitle

\begin{minipage}{0.48\textwidth}
  \epigraph{Those who know do not speak; those who speak do not know.}{Laozi (Lao Tzu)$^{1}${\footnote {One might echo this Taoist idea in the spirit of Erdős-Fej\'er: 'Everyone writes and nobody reads."\cite{Er}}}}
\end{minipage}\hfill
\begin{minipage}{0.48\textwidth}
  \epigraph{The purpose of life is to prove and to conjecture.}{P. Erdős}
\end{minipage}\hfill
\begin{minipage}{0.48\textwidth}
  \epigraph{If something can corrupt you, you're corrupted already.}{  Bob Marley}
\end{minipage}
  \hfill
\begin{minipage}{0.48\textwidth}  
  \epigraph{Sidon $\cdots$ had a persecution complex — so he opened a door a crack and said ‘Please come at another time and to another person’ … ‘K\'{e}rem, j\"{o}jjenek ink\'{a}bb m\'{a}skor \'{e}s m\'{a}shoz!’ It sounds better in Hungarian.}{P. Erd\H{o}s \cite[p.112]{SL}{\footnote{This can also be said about any author.}}}
\end{minipage}

\section*{Introduction}

The purpose of this note is to strengthen a criterion, due to Littlewood, concerning the \(L^\alpha\)-flatness of a class of real trigonometric polynomials. Our generalization is straightforward, and the proof is short, although it relied on the generalization of Clarkson's inequalities for \(L^\alpha(X, A, \mu; B)\) spaces, that is, Bochner spaces with values in Banach spaces, due  to R.~P.~Boas~\cite{Boa}.
As a consequence, we generalize the author's recent result asserting that Littlewood polynomials (i.e., polynomials with coefficients in \(\{\pm 1\}\)) are not \( L^\alpha\)-flat for \(\alpha \geq 4\)) \cite{AE}. This, in turn, confirms the \( L^\alpha \)-Littlewood conjecture for all \(\alpha > 0\). We thus deduce from our main result that the conjecture mentioned by D. J. Newman in \cite{Newman} holds. Namely, there is a positive constant $c<1$ such that for any polynomial $P$ from the class of Littelwood polynomials, that is, 
\begin{align}\label{PL}
P_n(z)=\frac{1}{\sqrt{n}}\sum_{m=1}^{n} \varepsilon_mz^m, \varepsilon_m =\pm 1, n \in \N^*.
\end{align}
 we have $\|P\|_1 \leq c$. Therefore, Erd\"{o}s's conjectures \cite{ErdosI},\cite[Problem 22]{ErdosII},\cite{ErdosIII} holds, that is, there is a positive constant $d$ such that for any polynomial $P$ from type \eqref{PL} we have $\|P\|_4 \geq (1+d)$. Whence, for any polynomial $P$ from type (1) we have $\|P\|_{\infty} \geq (1+d).$

Furstermore, as an application,  we show that uniform unimodular polynomials are not \( L^\alpha \)-flat for \( \alpha > 0\), and thus not ultraflat. This recovers a recent result of T.~Erdélyi~\cite{Er} posted on Arxiv, answering a question of Zachary Chase. 

Among other applications, we will present an application to the spectral theory of dynamical system related to the spectral type of some special cocycle. Those cocycles are called Morse cocycles and have a simple spectrum. Moreover, their spectral types are given by some kind of generalized Riesz procucts. As a consequence of our main result, we obtain that their spectrum is singular. This anwser an old question in ergodic theory. 

In the opposite direction, M.~G.~Nadkarni and the author constructed a dynamical system whose associated unitary operator has a Lebesgue component of multiplicity one in its spectrum. The construction relies on an ultraflat sequence \( P_n \), \( n = 1, 2, \ldots \), with real coefficients bounded away from \(1\) in absolute value~\cite{Abd-Nad-etds}.

Our approach is based on a direct generalization of a classical result of Littlewood, which asserts the absence of \( L^\alpha \)-flatness in a class of real trigonometric polynomials whose coefficients \( (a_m)_{m \geq 1} \) satisfy the inequality
\[
\sum_{m=1}^{n}a_m^2 \leq \frac{K}{n^2} \sum_{m=1}^{n}m^2 a_m^2,
\]
for some constant \( K > 0 \) and all sufficiently large \( n \). A precise statement will be given in the next section.

As an application, we conclude that uniform unimodular polynomials are not \( L^\alpha \)-flat for \( \alpha > 0 \). 

Recall that a uniform unimodular polynomial is defined by a sequence \( (c_i)_{i \geq 0} \) of complex numbers with \( |c_i| = 1 \). For each \( n \in \mathbb{N}^* \), we set
\[
P_n(z) = \sum_{i=0}^{n-1} c_i z^i, \quad \text{with } |z| = 1.
\]
Then,
\[
\|P_n\|_2^2 = \int_{S^1} |P_n(z)|^2 \, dz = \sum_{i=0}^{n-1} |c_i|^2 = n,
\]
where \( S^1 \) denotes the unit circle and \( dz \) the normalized Lebesgue measure on it.

The study of flatness dates back to Erd\"{o}s, Newman, and Littlewood. For recent developments and further references, see~\cite{AE}, \cite{Abd-Nad}, 
\cite{O}, \cite{Er}, and the references therein.

This problem has a long and rich history and is now seen as a central challenge in complex analysis, harmonic analysis, combinatorics, number theory, and spectral theory. It also has notable applications in digital communications and coding theory. For historical context and further background, we refer the reader to~the reference[3] in \cite{AE}.

So, there is much  work on flat sequence of polynomials coming from combinatorics, communication theory, ergodic theory, and other areas. We hope to write a more detailed paper taking cognisance of this, and, at the same time improve some known results in ergodic theory.\\


In a complementary direction, D.~J.~Newman constructed a family of analytic polynomials that are \( L^1 \)-flat~\cite{New}. The coefficients of these polynomials are precisely the classical \emph{Gauss sums}. As a consequence, it can be shown that there exists a sequence of \( L^2 \)-normalized polynomials whose Mahler measures converge to 1. This result can be obtained by applying the methods of generalized Riesz products developed in~\cite{Abd-nad3} and the references therein.

More precisely, one may invoke a criterion for absolute continuity established by Nadkarni and the present author in~\cite{Abd-nad3}. For further details on generalized Riesz products and their connection to the spectral theory of dynamical systems, we refer the reader to~\cite{Abd-nad3} and the references cited therein.

The polynomials constructed by Newman are unimodular, but their coefficients depend on the degree. In a similar manner, for Kahane’s construction of ultraflat unimodular polynomials~\cite{K}, the coefficients are not uniform, as they also depend on the degree.

Let \( (Q_n) \) be a sequence of \( L^2 \)-normalized analytic polynomials on the unit circle \( S^1 \). The sequence is said to be \emph{ultraflat} if
\[
\sup_{z \in S^1} \left|\, |Q_n(z)| - 1 \,\right| \xrightarrow[n \to \infty]{} 0.
\]
It is said to be \emph{\( L^\alpha \)-flat}, for \( \alpha > 0 \), if
\[
\int_{S^1} \left|\, |Q_n(z)| - 1 \,\right|^\alpha \, dz \xrightarrow[n \to \infty]{} 0.
\]
The sequence is called \emph{Mahler-flat} if its Mahler measure converges to 1, that is,
\[
\exp \left( \int_{S^1} \log |Q_n(z)| \, dz \right) \xrightarrow[n \to \infty]{} 1.
\]


In \cite{Abd-Nad} and \cite{Abd-nad3}, el Abdalaoui and Nadkarni also introduced the notion of \emph{almost everywhere flatness}.  
A sequence \( (Q_n) \) is said to be \textbf{almost everywhere flat} if, for almost every \( z \) with respect to the Lebesgue measure,  
\[
\big| Q_n(z) \big| \xrightarrow[n \to +\infty]{} 1.
\]  
In the same works, the authors also considered the notion of \emph{flatness in measure}.  
A sequence \( (Q_n) \) is called \textbf{flat in measure} if, for every \( \varepsilon > 0 \), we have  
\[
\big| \{ z \in S^1 \ :\ \big|\,|Q_n(z)| - 1\,\big| > \varepsilon \} \big| \xrightarrow[n \to +\infty]{} 0.
\]  
Clearly, almost everywhere flatness implies flatness in measure. These two notions play a crucial role in the proof of Littlewood theorem and thereby in the proof of the main result of this paper.

We would also like to emphasize once again that, obviously, $L^\alpha$-flatness implies ultraflatness.

\section{Main result and its proof.}
We begin by stating our first main result.

\begin{Th}\label{main1} Let $(a_j)_{j \geq 1}$  be a sequence of real numbers and $(c_j)$ a sequence of complex number of modulus $1$.  Suppose that 
	$$\sum_{m=1}^{n}a_m^2 \leq \frac{K}{n^2} \sum_{m=1}^{n}m^2a_m^2,$$
	for some absolute constant $K$.  
Then, the sequence of analytic polynomials
$$\ds P_n(z)=\frac{\ds \sum_{j=1}^{n}a_j c_j z^j}{\sqrt{\ds \sum_{j=1}^n a_j^2}}, |z|=1,$$ is not  $L^p$-flat, for any $p>0.$
\end{Th}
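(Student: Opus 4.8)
The plan is to reduce the statement to the classical ($c_j$ real) situation and then quantify the one place where the complex phases genuinely intervene. First I would record that the hypothesis is invariant under multiplication of the coefficients by unimodular numbers: writing $b_j=a_jc_j$, one has $\sum_{m=1}^{n}|b_m|^2=\sum_{m=1}^{n}a_m^2$ and $\sum_{m=1}^{n}m^2|b_m|^2=\sum_{m=1}^{n}m^2a_m^2$, so the energy inequality $\sum|b_m|^2\le\frac{K}{n^2}\sum m^2|b_m|^2$ holds with the same constant $K$. Thus it suffices to prove that a normalized analytic polynomial with complex coefficients $b_j$ obeying this inequality is not $L^p$-flat.

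Next I would cut down the range of $p$. Since $\|P_n\|_2=1$ for all $n$, Chebyshev's inequality shows that $L^p$-flatness for any single $p>0$ forces $\big|\,|P_n|-1\,\big|\to 0$ in measure, that is, flatness in measure. Conversely, the bound $\|P_n\|_2=1$ makes the family $\{|P_n|^\alpha\}_n$ uniformly integrable for every $\alpha<2$: on $\{|P_n|>M\}$ one estimates, by Hölder, $\int|P_n|^\alpha\le\|P_n\|_2^\alpha\,|\{|P_n|>M\}|^{1-\alpha/2}\le M^{-(2-\alpha)}$, uniformly in $n$. Hence flatness in measure upgrades to genuine $L^\alpha$-flatness for every $\alpha\in(0,2)$, which is exactly why the notions of almost-everywhere flatness and of flatness in measure emphasized in the introduction are the right currency here. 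Consequently it is enough to reach a contradiction under the assumption that $P_n$ is $L^\alpha$-flat for one fixed exponent $\alpha\in(1,2)$, precisely the range in which Clarkson's second inequality and its Boas extension to the Bochner spaces $L^\alpha(S^1;B)$ are available.

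With $\alpha\in(1,2)$ fixed, I would then run Littlewood's flatness obstruction directly for the complex-coefficient polynomial. The decisive point, and the main obstacle, is that one cannot remove the phases $c_j$ by passing to real and imaginary parts: splitting $b_j$ into $a_j\,\mathrm{Re}\,c_j$ and $a_j\,\mathrm{Im}\,c_j$ destroys the energy inequality, since that inequality couples all the coefficients through $|b_j|$ alone. The obstruction must therefore be carried out for the full $\mathbb{C}$-valued (equivalently $\mathbb{R}^2$-valued) analytic polynomial. I would follow Littlewood's argument line by line except at the convexity step, where the scalar Clarkson inequality is replaced by the generalized second Clarkson inequality for $L^\alpha(S^1;B)$, with $B$ the relevant finite-dimensional (hence admissible) Banach space; this is what licenses the uniform-convexity estimate for the Banach-valued integrand and yields the complex analogue of Littlewood's theorem under the energy hypothesis. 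Combining this with the reduction of the previous paragraph contradicts the assumed flatness, so $P_n$ is not $L^p$-flat for any $p>0$. I expect the bookkeeping in the vector-valued convexity estimate, namely tracking the conjugate exponents $\alpha$ and $\alpha/(\alpha-1)$ and verifying that Boas's hypotheses on $B$ are met, to be the only genuinely delicate part; everything else is the invariance and uniform-integrability bookkeeping above.
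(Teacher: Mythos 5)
Your first two reductions are sound and match what the paper does implicitly: the phases can be absorbed into the coefficients, and flatness in measure plus the uniform integrability coming from $\|P_n\|_2=1$ upgrades $L^p$-flatness for one $p>0$ to $L^\alpha$-flatness for a fixed $\alpha\in(1,2)$. The gap is in your third paragraph, and it is twofold. First, your ``decisive obstacle'' is not an obstacle. The paper never splits the coefficients $b_j=a_je^{i\phi_j}$ into $a_j\,\mathrm{Re}\,c_j$ and $a_j\,\mathrm{Im}\,c_j$; it splits the \emph{function} into real and imaginary parts, $f_n(e^{it})=g_n(t)+ih_n(t)$ with $g_n(t)=\sum_{m=1}^{n}a_m\cos(mt+\phi_m)$ and $h_n(t)=\sum_{m=1}^{n}a_m\sin(mt+\phi_m)$. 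These are real trigonometric polynomials in exactly the amplitude--phase form covered by Littlewood's criterion (Lemma \ref{LF}, and its sine variant Lemma \ref{LFI}), whose hypothesis involves only the amplitudes $a_m$ and tolerates arbitrary phases; the energy inequality is therefore inherited verbatim by both parts, and the scalar Littlewood theorem yields $\limsup_n\|\tilde g_n\|_\alpha\le 1-A(K,\alpha)$ and likewise for $\tilde h_n$ when $\alpha<2$. You conflated the (harmless) splitting of the function with a (genuinely bad) splitting of the coefficients, and on that basis declared the scalar route impossible.

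Second, having ruled out the scalar route, you replace the core of the argument by an unproven assertion: ``follow Littlewood's argument line by line'' for the $\mathbb{R}^2$-valued polynomial, inserting Boas's inequality ``at the convexity step.'' That Banach-valued analogue of Littlewood's theorem is precisely what would have to be proved, and your sketch gives no indication of how the adaptation goes; this is where the whole difficulty lives. The paper uses Boas's generalization of Clarkson's second inequality (Lemma \ref{Clar}) for a different and much more modest purpose: taking $F=\tilde g_n/\sqrt2$ and $G=i\tilde h_n/\sqrt2$, one has $F+G=\tilde f_n$ and $F-G=\overline{\tilde f_n}$, and since $|\overline{\tilde f_n}|=|\tilde f_n|$ the inequality reads
\begin{align*}
2^{\frac1r}\,\bigl\|\tilde f_n\bigr\|_\alpha \;\le\; \frac{2^{\frac1{s'}}}{\sqrt2}\Bigl(\bigl\|\tilde g_n\bigr\|_\alpha^{s}+\bigl\|\tilde h_n\bigr\|_\alpha^{s}\Bigr)^{\frac1s},
\end{align*}
so that, under assumed flatness ($\|\tilde f_n\|_\alpha\to1$) and the two scalar Littlewood bounds on the right, one gets $2^{1/r}\le\sqrt2\,(1-A(K,\alpha))$, which is contradictory as $r$ approaches $2$. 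In other words, Clarkson--Boas is the device that \emph{combines} the two scalar conclusions into a statement about $f_n$; it is not a substitute for Littlewood's argument in a vector-valued setting. As written, your proposal assembles the right ingredients in the wrong order and leaves the decisive step unproved, justified by a premise that is false.
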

Consequently, we derive the following corollaries.
\begin{Cor}Let $(c_j)_{j \geq 1}$  be a sequence of complex number of modulus $1$,  and $\ds P_n(z)=\frac{1}{\sqrt{n}}\sum_{j=0}^{n-1}c_j z^j, |z|=1 , n=1,2,\cdots$. Then, there is no sequence from the class $(P_n)$ which is $L^p$-flat, for $p>0$.
\end{Cor}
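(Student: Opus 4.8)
The plan is to obtain this Corollary as the special case of Theorem~\ref{main1} in which every $a_j$ equals $1$. First I would set $a_j = 1$ for all $j \geq 1$ while retaining the given unimodular sequence $(c_j)$. With this choice the denominator appearing in the definition of $P_n$ in Theorem~\ref{main1} becomes $\sqrt{\sum_{j=1}^n a_j^2} = \sqrt{n}$, which is exactly the normalizing factor $1/\sqrt{n}$ in the statement of the Corollary, so the two normalizations coincide automatically.

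Next I would verify that the constant sequence $a_j \equiv 1$ satisfies the hypothesis of Theorem~\ref{main1}. Here $\sum_{m=1}^n a_m^2 = n$ and $\sum_{m=1}^n m^2 a_m^2 = \sum_{m=1}^n m^2 = \frac{n(n+1)(2n+1)}{6}$, so that
$$\frac{K}{n^2}\sum_{m=1}^n m^2 a_m^2 = \frac{K(n+1)(2n+1)}{6n} \geq \frac{2K n^2}{6n} = \frac{Kn}{3}.$$
Taking $K = 3$ (indeed any $K \geq 3$) yields $\frac{K}{n^2}\sum_{m=1}^n m^2 a_m^2 \geq n = \sum_{m=1}^n a_m^2$ for every $n \geq 1$, so the required inequality holds with an absolute constant.

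It then remains only to reconcile the indexing. The family produced by Theorem~\ref{main1} is $\frac{1}{\sqrt{n}}\sum_{j=1}^n c_j z^j$, whereas the Corollary is phrased with $\frac{1}{\sqrt{n}}\sum_{j=0}^{n-1} c_j z^j$. Relabelling the coefficients and multiplying by the unimodular factor $z$, which on $|z|=1$ leaves $|P_n(z)|$ unchanged and hence does not affect any notion of flatness, identifies the two families term by term. Theorem~\ref{main1} therefore applies directly and gives that no sequence drawn from the class $(P_n)$ can be $L^p$-flat for any $p>0$.

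Since the argument is a plain specialization, I do not expect a genuine obstacle: the only points that require care are the bookkeeping of the normalizing constant $\sqrt{n}$ and the observation that the shift of summation index is harmless for $L^p$-flatness, both of which are routine.
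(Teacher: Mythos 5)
Your proposal is correct and matches the paper's own proof: the paper likewise specializes Theorem~\ref{main1} to $a_j \equiv 1$, using $\sum_{j=1}^{n} j^2 = \frac{n(n+1)(2n+1)}{6} \geq \frac{n^3}{3}$ to get the hypothesis with $K=3$. Your extra remark about the harmless index shift (multiplication by $z$ on $|z|=1$) is a detail the paper leaves implicit, but it does not change the argument.
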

\begin{proof}A straiforward computation gives
$$\sum_{j=1}^{n}j^2=\frac{n.(n+1)(2n+1)}{6} \geq \frac{n^3}{3}.$$
Therefore
$$\sum_{j=1}^{n}1 \leq \frac{3}{n^2}\sum_{j=1}^{n}j^2.$$
This conclude the proof of the corollary.
\end{proof}

\begin{Cor}
Let $(c_j)_{j \geq 1}$  be a sequence of complex number of modulus $1$, $n$ be a non-negative integer and $\ds P_n(z)=\frac{1}{\sqrt{n}}\sum_{j=0}^{n-1}c_j z^j, |z|=1$. Then, there is no sequence from the class $(P_n)$ which is ultraflat.
\end{Cor}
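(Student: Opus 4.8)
The plan is to deduce this corollary immediately from the preceding one by exploiting the fact that ultraflatness is the strongest of the flatness notions introduced above. The crucial point is the direction of the implication: uniform convergence of $\big|\,|P_n(z)|-1\,\big|$ to $0$ forces the corresponding integral averages to vanish as well, so ultraflatness entails $L^\alpha$-flatness for every $\alpha>0$, and it is this direction that lets us transfer a nonexistence result for $L^p$-flatness to ultraflatness.

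Concretely, first I would recall that $dz$ denotes the normalized Lebesgue measure on $S^1$, hence a probability measure of total mass $1$. For any exponent $\alpha>0$ this yields the elementary pointwise-to-average estimate
\[
\int_{S^1}\big|\,|P_n(z)|-1\,\big|^\alpha\,dz \;\leq\; \Big(\sup_{z\in S^1}\big|\,|P_n(z)|-1\,\big|\Big)^\alpha,
\]
since the integrand is dominated by the constant appearing on the right and the measure has unit mass.

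Next, I would argue by contradiction: suppose some sequence drawn from the class $(P_n)$ were ultraflat, that is, $\sup_{z\in S^1}\big|\,|P_n(z)|-1\,\big|\to 0$ as $n\to\infty$. Then the right-hand side of the displayed inequality tends to $0$, and therefore so does the left-hand side, for every $\alpha>0$. Fixing any single exponent $p>0$, we conclude that this sequence would be $L^p$-flat.

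This, however, contradicts the previous corollary, which asserts that no sequence from the class $(P_n)$ is $L^p$-flat for any $p>0$. Hence no such sequence can be ultraflat, which is the desired conclusion. I expect no genuine obstacle here: the statement is a soft consequence of the previous corollary together with the inclusion of the sup norm into the $L^\alpha$ norm on a probability space, and the only point requiring care is that the implication must run from ultraflatness toward $L^\alpha$-flatness, the former being the stronger notion.
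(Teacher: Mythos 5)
Your argument is correct and is essentially the same as the paper's: the paper likewise derives the corollary from the preceding one via the domination $\|f\|_\alpha \leq \|f\|_\infty$ on the probability space $(S^1,dz)$, so that ultraflatness implies $L^\alpha$-flatness, which the previous corollary excludes. Your write-up merely makes explicit the pointwise-to-average estimate and the contradiction that the paper leaves implicit.
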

\begin{proof}The proof follows directly from the fact that the \( L^\alpha \)-norm is dominated by the \( L^\infty \)-norm; specifically, for any \( f \in L^\alpha(S^1, dz) \), we have
\[
\|f\|_\alpha \leq \|f\|_\infty.
\]
\end{proof}
From Theorem \ref{main1}, we get also the following corollary, which  resolve the Erd\"{o}s–Littlewood conjecture in the affirmative.
\begin{Cor}Let $n \geq 1$ and 
$$P_n(z)=\sum_{k=0}^{n-1}\epsilon_n(k)z^k, ~~~~\epsilon_n(k) \in \big\{\pm 1\big\},$$
Then the sequence $(P_n)$ is not $L^\alpha$-flat for any $\alpha>0$.
\end{Cor}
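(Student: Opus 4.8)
The plan is to recognize the final Corollary as a direct specialization of Theorem~\ref{main1}. A Littlewood polynomial has coefficients $\epsilon_n(k) \in \{\pm 1\}$, so in particular $|\epsilon_n(k)| = 1$ for every $k$. I would therefore factor each coefficient as $\epsilon_n(k) = a_k c_k$ with $a_k = 1$ (a real sequence) and $c_k := \epsilon_n(k)$ a number of modulus $1$; up to the harmless index shift from $k = 0, \dots, n-1$ to $j = 1, \dots, n$, which on $|z| = 1$ merely multiplies $P_n$ by a unimodular factor $z$ and hence leaves $\big|\,|P_n(z)| - 1\,\big|$ unchanged, the Littlewood polynomial is exactly of the form treated in Theorem~\ref{main1}.

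With $a_j \equiv 1$ the coefficient hypothesis of Theorem~\ref{main1} is immediate and reduces to the computation already performed in the proof of the first Corollary: since
\[
\sum_{j=1}^{n} j^2 = \frac{n(n+1)(2n+1)}{6} \geq \frac{n^3}{3},
\]
we obtain $\sum_{m=1}^{n} a_m^2 = n \leq \frac{3}{n^2}\sum_{m=1}^{n} m^2 = \frac{3}{n^2}\sum_{m=1}^{n} m^2 a_m^2$, so the required inequality holds with the absolute constant $K = 3$. All hypotheses of Theorem~\ref{main1} being met, the $L^2$-normalized sequence $\frac{1}{\sqrt{n}}\sum_{j} \epsilon_n(j) z^j$ is not $L^\alpha$-flat for any $\alpha > 0$.

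The one point requiring care — and the only genuine subtlety — is that the signs $\epsilon_n(k)$ are permitted to depend on the degree $n$, whereas Theorem~\ref{main1} is phrased in terms of a single fixed sequence $(c_j)_{j \geq 1}$. I would resolve this by observing that the conclusion of Theorem~\ref{main1} is in fact quantitative and uniform: the argument (through the $L^\alpha$ Littlewood theorem together with the generalized Clarkson inequality) yields a lower bound
\[
\liminf_{n \to \infty} \int_{S^1} \big|\,|P_n(z)| - 1\,\big|^\alpha \, dz \geq \delta(\alpha, K) > 0,
\]
where $\delta(\alpha, K)$ depends only on $\alpha$ and on the constant $K$ in the coefficient inequality, and not on the particular unimodular coefficients selected. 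Since $K = 3$ serves simultaneously for every $n$ and every choice of signs, this uniform lower bound excludes $L^\alpha$-flatness even when the coefficients vary with $n$, and the final Corollary follows. If one prefers not to invoke uniformity, the conclusion still holds verbatim for any fixed sign sequence, recovering the statement for a single Littlewood sequence as an instance of the first Corollary. I expect the uniformity remark to be the only place where one must look past the literal wording of Theorem~\ref{main1}; everything else is the routine verification above.
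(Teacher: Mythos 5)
Your proposal is correct in substance and shares the paper's skeleton: the identical computation $\sum_{j=1}^{n} j^2 = \frac{n(n+1)(2n+1)}{6} \geq \frac{n^3}{3}$ gives the coefficient hypothesis with $K=3$, and you correctly isolate the one genuine subtlety, namely that the signs $\epsilon_n(k)$ vary with $n$ while Theorem~\ref{main1} is stated for a fixed sequence $(c_j)$. The two proofs discharge that subtlety differently, however. The paper never routes through Theorem~\ref{main1} at all: it notes that the coefficients are \emph{real}, so that Littlewood's original argument \cite[p.307]{L} applies to each polynomial $P_n$ individually; since Littlewood's criterion (Lemma~\ref{LF}) is at bottom a per-polynomial estimate whose constant $A(K,\alpha)$ depends only on $K$ and $\alpha$, the $n$-dependence of the signs costs nothing. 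You instead assert that the proof of Theorem~\ref{main1} yields a non-flatness bound $\delta(\alpha,K)$ uniform over all admissible coefficient choices. That is the right idea, and it is what makes the corollary true, but in your write-up it is postulated rather than established: it cannot be read off from the statement of Theorem~\ref{main1} (your fallback to a fixed sign sequence proves strictly less than the corollary, which concerns the full Erd\H{o}s--Littlewood class with $n$-dependent signs), so it is genuinely a claim about the internals of a proof you did not inspect. To close the argument you need one further sentence, which is exactly what the paper's citation supplies: Lemmas~\ref{LF} and~\ref{LFI} hold polynomial-by-polynomial with constants depending only on $(K,\alpha)$ (this is how Littlewood proves them), and the Clarkson step (Lemma~\ref{Clar}) involves only absolute constants, so the bound produced by the argument of Theorem~\ref{main1} is independent of the particular unimodular coefficients. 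With that justification made explicit, your proof is a valid variant of the paper's; the paper's direct appeal to Littlewood's real-coefficient argument is simply the more economical way of invoking the same uniformity.
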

\begin{proof} Notice that for any $n \geq 1$ and $ j \in \{0,\cdots,n-1\}$, $\epsilon_n(j) \in  \R$ and we have 
$$\sum_{j=1}^{n}\epsilon_n(j)^2 \leq \frac{3}{n^2}\sum_{j=1}^{n}j^2 \epsilon_n(j)^2 .$$
Therefore Littlewood's argument 
$$ |P_n(z)-P_n(z')|=\pm\{\big|P_n(z)\big|\} \pm \{\big|P_n(z)\big|\},  z,z' \in S^1,$$
can be applied \cite[p.307]{L}. The proof of the corollary is complete.
\end{proof}
Let us now proceed to the proof of Theorem~\ref{main1}, which is based on the following classical theorem of Littlewood concerning \( L^\alpha \)-flatness.

\begin{lem}[Littlewood's criterion of $L^\alpha$-flatness \cite{L}]\label{LF} Let $\ds g_n(t)=\sum_{m=1}^{n}a_m \cos(m t+\phi_m)$ and assume that we have
	$$\sum_{m=1}^{n}a_m^2 \leq \frac{K}{n^2} \sum_{m=1}^{n}m^2a_m^2,$$
	for some absolute constant $K$. Then, for any $\alpha>0$ there exists a constant $A(K,\alpha)$ such that
	$$
	\begin{array}{ll}
	\ds \ds \limsup_{n}\Big\{\frac{\|g_n\|_\alpha}{\|g_n\|_2}\Big\} \leq \big(1-A(K,\alpha)\big), & \hbox{ if $\alpha<2$;} \\ \\
\ds	\ds \limsup_{n}\Big\{\frac{\|g_n\|_\alpha}{\|g_n\|_2}\Big\}  \geq \big(1+A(K,\alpha)\big), & \hbox{ if $\alpha>2$.}
	\end{array}
	$$
\end{lem}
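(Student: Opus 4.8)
The plan is to read the coefficient hypothesis as a lower bound on the oscillation of $g_n$ and then to show that no $L^2$-normalized sequence with that much oscillation can concentrate its modulus near a constant. First I would record the spectral identities $\|g_n\|_2^2=\frac12\sum_{m=1}^n a_m^2$ and $\|g_n'\|_2^2=\frac12\sum_{m=1}^n m^2a_m^2$, so that the hypothesis is exactly $\|g_n'\|_2^2\ge \frac{n^2}{K}\|g_n\|_2^2$. Rather than work with the derivative directly, I would convert this into a shift estimate: for $g_n^\tau(t):=g_n(t+\tau)$ one computes $\|g_n-g_n^\tau\|_2^2=2\sum_{m=1}^n a_m^2\sin^2(m\tau/2)$, and choosing $\tau=\pi/n$ together with $\sin^2(x)\ge (2x/\pi)^2$ on $[0,\pi/2]$ yields $\|g_n-g_n^\tau\|_2^2\ge \frac{4}{K}\|g_n\|_2^2$. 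Equivalently, after normalizing $\|g_n\|_2=1$, the correlation satisfies $\langle g_n,g_n^\tau\rangle\le 1-\tfrac{2}{K}$.

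Next I would argue by contradiction. Normalize $\|g_n\|_2=1$ and suppose that along a subsequence $\|g_n\|_\alpha\to 1$. For $\alpha<2$ the inequality $\|g_n\|_\alpha\le\|g_n\|_2$ holds on the probability space, and the strict convexity of $t\mapsto t^{2/\alpha}$ forces the deficit $1-\|g_n\|_\alpha^2=\int |g_n|^2-(\int|g_n|^\alpha)^{2/\alpha}$ to control the spread of $|g_n|^\alpha$ about its mean; hence $|g_n|\to1$ in measure. For $\alpha>2$ one has instead $\|g_n\|_\alpha\ge\|g_n\|_2$, and the same conclusion follows once uniform integrability of $(|g_n|^2)$ is secured, which is automatic here because $\|g_n\|_\alpha\to1$ bounds a moment strictly above the second, so that $\int_{\{|g_n|>\Lambda\}}|g_n|^2\le \Lambda^{2-\alpha}\|g_n\|_\alpha^\alpha\to0$ uniformly. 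In either regime the assumed flatness of the ratio upgrades to flatness in measure, the notion emphasized in the introduction.

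The geometric heart is to clash flatness in measure against the shift estimate. If $|g_n|\to1$ in measure, then off a set of measure $o(1)$ the value $g_n$ lies within $\varepsilon$ of $\pm1$; translation invariance gives the same for $g_n^\tau$, so off a small set $g_n-g_n^\tau$ lies within $2\varepsilon$ of $\{0,\pm2\}$. Writing $D_n$ for the sign-flip set on which $g_n(t)$ and $g_n(t+\tau)$ have opposite signs, the bound $\langle g_n,g_n^\tau\rangle\le1-\tfrac2K$ forces $|D_n|\ge \tfrac1K-o(1)$. For each $t\in D_n$ the polynomial $g_n$ must vanish somewhere in $(t,t+\tau)$; since a real trigonometric polynomial of degree $n$ has at most $2n$ zeros and $\tau\asymp 1/n$, these zeros occur with positive density and each is flanked by a passage of $|g_n|$ from a value near $1$ down through $0$ and back. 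I would then show that these passages occupy a set of measure bounded below by a constant depending only on $K$, on which $|g_n|$ is bounded away from $1$, contradicting flatness in measure and yielding the uniform gap $A(K,\alpha)$. The sign of the gap is dictated by $\alpha$: the forced spread of the values of $|g_n|$ pushes $\|g_n\|_\alpha$ strictly below $\|g_n\|_2$ when $\alpha<2$ and strictly above it when $\alpha>2$.

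The main obstacle is precisely this last quantitative step: converting the statement "a zero lies within each window $(t,t+\tau)$ for $t\in D_n$" into a lower bound of definite measure for the dip set $\{\,|g_n|<\tfrac12\,\}$. The difficulty is that all of the oscillation could a priori hide in extremely sharp transitions, equivalently in a tall thin spike of $g_n$, which by itself remains compatible with $|g_n|\to1$ in measure; ruling this out requires a Bernstein-type control relating transition width to $\|g_n'\|_\infty$, or a direct $L^\alpha$ energy accounting over the windows, and it is here that the bound $2n$ on the number of sign changes must be used in an essential, uniform way. Carrying this through with explicit constants produces $A(K,\alpha)>0$ and completes the proof.
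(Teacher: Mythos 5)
The paper itself does not prove Lemma~\ref{LF}: it defers entirely to Littlewood's paper \cite{L}, recording only the key ingredients (the Bernstein--Zygmund inequality $\|f_n'\|_2\le n\|f_n\|_2$, its reversal $\|f_n'\|_2\ge c\,n\|f_n\|_2$ under the coefficient hypothesis, and the flatness-in-measure Lemmas~\ref{LFIee1} and~\ref{LFI-e-e-2}). Your outline is faithful to exactly this blueprint, and the steps you do complete are correct: the identities $\|g_n\|_2^2=\tfrac12\sum a_m^2$ and $\|g_n'\|_2^2=\tfrac12\sum m^2a_m^2$; the shift estimate $\|g_n-g_n(\cdot+\pi/n)\|_2^2\ge\tfrac{4}{K}\|g_n\|_2^2$ (a clean repackaging of the reverse Bernstein inequality via Jordan's inequality); the upgrade of a near-extremal ratio $\|g_n\|_\alpha/\|g_n\|_2\to1$ to flatness in measure by strict convexity of $t\mapsto t^{2/\alpha}$ (resp.\ $t\mapsto t^{\alpha/2}$); and the deduction that the sign-flip set $D_n$ has measure at least $\tfrac1K-o(1)$.

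However, the proof is incomplete precisely at the step where it must be won, and you say so yourself: you never derive the contradiction between $|D_n|\ge\tfrac1K-o(1)$ and flatness in measure, because you cannot yet exclude that all sign changes occur in sharp transitions of negligible total measure. That is a genuine gap---without it nothing is proved---but it can be closed with the very tools you name. Let $\eta_n\to0$ be the measure of the dip set $\{|g_n|<\tfrac12\}$ (this tends to zero by flatness in measure), and let $I_1,\dots,I_{N_n}$ be the maximal intervals of this open set that contain a sign change of $g_n$. Climbing from a zero inside $I_j$ to an endpoint, where $|g_n|=\tfrac12$, Cauchy--Schwarz gives $\int_{I_j}|g_n'|^2\,dt\ge\frac{1}{4|I_j|}$; summing over $j$, bounding the total by Bernstein--Zygmund ($\|g_n'\|_2^2\le n^2\|g_n\|_2^2$) and bounding the sum from below by AM--HM ($\sum_j|I_j|^{-1}\ge N_n^2/\sum_j|I_j|\ge N_n^2/(C\eta_n)$) yields $N_n=O(n\sqrt{\eta_n})$. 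Since every $t\in D_n$ has a sign change in $(t,t+\tau)$ with $\tau\asymp1/n$, the set $D_n$ is covered by the $\tau$-neighborhoods of these $N_n$ intervals, whence $|D_n|\le C\big(\eta_n+N_n/n\big)=O(\sqrt{\eta_n})\to0$, the desired contradiction. Two smaller points you gloss over: (i) as phrased, your contradiction scheme gives a qualitative gap for each fixed sequence, not a single constant $A(K,\alpha)$ valid over the whole hypothesis class; this needs either quantitative versions of each step (Jensen-gap implies concentration, with explicit moduli) or a compactness argument, noting that for bounded degree the ratio is automatically bounded away from $1$; (ii) the lower bound on $|D_n|$ also requires checking that the exceptional set of measure $o(1)$ carries only $o(1)$ of the $L^2$ mass of $g_n-g_n^\tau$, which does follow since $\int|g_n|^2=1$ while the good set already carries mass $(1-\varepsilon)^2(1-o(1))$.
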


 

For the proof of Lemma \ref{LF}, the reader is referred to \cite{L}. A crucial argument in the proof relies on the Bernstein–Zygmund inequality \cite[Theorem 3.13, Chapter X, p. 11]{Zyg}, which states
\[
\big\|f_n'\big\|_{2} \;\le\; n \,\big\|f_n\big\|_{2}
\quad\text{for any trigonometric polynomial }f_n.
\]
Under the hypotheses of Littlewood’s criterion (Lemma \ref{LF}), however, one obtains the reverse inequality: there exists a constant \(K>0\) such that
\[
\big\|f_n'\big\|_{2} \;\ge\; K\,n \,\big\|f_n\big\|_{2}.
\]

In \cite{AUDJ}, the author applies Lemma \ref{LF} to show that the sequence of even‐degree palindromic Littlewood polynomials cannot be \(L^\alpha\)‐flat for any \(\alpha \ge 0\). It is also straightforward to deduce the following from Lemma \ref{LF}.

\begin{lem}\label{LFI} Let $\ds h_n(t)=\sum_{m=1}^{n}a_m \sin(m t+\phi_m)$ and assume that we have
	$$\sum_{m=1}^{n}a_m^2 \leq \frac{K}{n^2} \sum_{m=1}^{n}m^2a_m^2,$$
	for some absolute constant $K$. Then, for any $\alpha>0$ there exists a constant $A(K,\alpha)$ such that
	$$
	\begin{array}{ll}
	\ds \limsup_{n}\Big\{\frac{\|h_n\|_\alpha}{\|h_n\|_2}\Big\} \leq \big(1-A(K,\alpha)\big), & \hbox{ if $\alpha<2$;} \\
	\\
	\ds \limsup_{n}\Big\{\frac{\|h_n\|_\alpha}{\|h_n\|_2} \Big\}\geq \big(1+A(K,\alpha)\big), & \hbox{ if $\alpha>2$.}
	\end{array}
	$$
\end{lem}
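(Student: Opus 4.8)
The plan is to reduce Lemma~\ref{LFI} directly to Littlewood's criterion (Lemma~\ref{LF}) by means of an elementary phase shift, exploiting the fact that the conclusion of Lemma~\ref{LF} is insensitive to the particular choice of the phases $\phi_m$.

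First I would recall the identity $\sin\theta = \cos\!\big(\theta - \tfrac{\pi}{2}\big)$, valid for every real $\theta$. Applying this termwise with $\theta = mt + \phi_m$ gives
$$h_n(t) = \sum_{m=1}^{n} a_m \sin(mt + \phi_m) = \sum_{m=1}^{n} a_m \cos\!\big(mt + \psi_m\big), \qquad \psi_m := \phi_m - \frac{\pi}{2}.$$
Thus $h_n$ is \emph{identically equal}, as a function, to a polynomial of the form $g_n$ appearing in Lemma~\ref{LF}, with the same amplitudes $(a_m)_{m=1}^{n}$ and with the phases $\phi_m$ replaced by $\psi_m$. In particular every $L^\alpha$-norm and the $L^2$-norm of $h_n$ coincide with those of this cosine polynomial, so the ratio $\|h_n\|_\alpha / \|h_n\|_2$ transfers verbatim.

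Next I would check that the hypotheses of Lemma~\ref{LF} are met. Since the amplitudes $a_m$ are unchanged by the phase shift, the coefficient inequality
$$\sum_{m=1}^{n} a_m^2 \leq \frac{K}{n^2}\sum_{m=1}^{n} m^2 a_m^2$$
assumed in Lemma~\ref{LFI} is precisely the hypothesis required in Lemma~\ref{LF}, and the constant $K$ carries over unchanged. The only point that needs to be observed is that Lemma~\ref{LF} is stated for \emph{arbitrary} real phases $\phi_m$; hence taking these phases to be $\psi_m = \phi_m - \pi/2$ is admissible and violates no restriction.

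Finally, I would apply Lemma~\ref{LF} to this cosine representation of $h_n$. This yields, for each $\alpha > 0$, a constant $A(K,\alpha)$ with
$$\limsup_n \frac{\|h_n\|_\alpha}{\|h_n\|_2} \leq 1 - A(K,\alpha) \quad (\alpha < 2), \qquad \limsup_n \frac{\|h_n\|_\alpha}{\|h_n\|_2} \geq 1 + A(K,\alpha) \quad (\alpha > 2),$$
which is exactly the asserted conclusion. I do not expect any genuine obstacle here: the entire content of the argument is the exact trigonometric identity $\sin = \cos(\,\cdot\, - \pi/2)$ together with the observation that the amplitudes, and therefore both the coefficient hypothesis and all the relevant norms, are preserved. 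This is precisely why the text describes the deduction from Lemma~\ref{LF} as straightforward.
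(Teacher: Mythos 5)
Your proposal is correct and coincides with the paper's own proof: both rest on the identity $\sin(x)=\cos\!\left(x-\tfrac{\pi}{2}\right)$, the phase shift $\phi_m \mapsto \phi_m - \tfrac{\pi}{2}$, and the observation that the amplitudes $(a_m)$ — and hence the coefficient hypothesis and all norms — are unchanged, so Lemma~\ref{LF} applies verbatim.
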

\begin{proof} 

The result follows directly from the classical trigonometric identity
\[
\cos\left(x - \frac{\pi}{2}\right) = \sin(x), \quad \text{for all } x \in \mathbb{R}.
\]

Indeed, for each \( m \in \mathbb{N} \), define \( \phi_m' := \phi_m - \frac{\pi}{2} \). With this definition, we have
\[
\sin(mt + \phi_m) = \cos(mt + \phi_m').
\]
Hence, the sequence of functions
\[
h_n(t) := \sum_{m=1}^{n} a_m \sin(mt + \phi_m)
\]
can be rewritten as
\[
h_n(t) = \sum_{m=1}^{n} a_m \cos(mt + \phi_m').
\] 
Therefore, by applying Lemma~\ref{LF} to the sequence \( (h_n) \), we obtain the desired conclusion. This concludes the proof of the lemma.
\end{proof}



We also require the following generalizatin of the second Clarkson inequalities due to Boas \cite{Boa}. These inequalities are highly important in the geometry of Banach spaces, and it is worth noticing that the space $$L^p((X,\ca,m);B)=\Big\{ f~~:~~ \int_X \big\|f(x)\|^p dm(x) <\infty\Big\}$$ of Bochner $p$-integrable functions, $p>1$ is uniformly convex if an only if $B$ is uniformily convex \cite{Day}. 
For any real $a>1$, we put $a'=\frac{a}{a-1}.$

\begin{lem}\label{Clar}Let $(X,\ca,m)$ be a measure space, $1< p \leq 2$, and  $F,G \in L^p(X,\ca,m;B)$. Then, for 
an $r$ and an $s$ with $1<s \leq p \leq r$ and $r'  \leq s,$
\begin{align}\label{IClar}
\Big(\Big\|F+G\Big\|_p^{r}+\Big\|F-G\Big\|_p^{r}\Big)^{\frac{1}{r}} 
\leq  2^{\frac{1}{s'}}\Big(\big\|F\big\|_p^{s}+\big\|G\big\|_p^{s}\Big)^{\frac{1}{s}},
\end{align}
\end{lem}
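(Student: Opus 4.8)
The plan is to read inequality (\ref{IClar}) as the statement that a single linear map is bounded between two mixed-norm spaces, and then to obtain that bound by interpolating between two explicit endpoints and filling out the remaining exponents by monotonicity. Write $E = L^p(X,\ca,m;B)$ with its norm $\|\cdot\|_p$, and for $1 \le q \le \infty$ let $\ell^q_2(E)$ denote $E \oplus E$ equipped with the norm $(\|u\|_p^q + \|v\|_p^q)^{1/q}$. Consider the linear map $J(F,G) = (F+G,\,F-G)$. Then (\ref{IClar}) is exactly the assertion that $\|J\|_{\ell^s_2(E) \to \ell^r_2(E)} \le 2^{1/s'}$.

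First I would record two endpoints. At $(r,s) = (\infty,1)$ the bound is just the triangle inequality, $\max(\|F+G\|_p,\|F-G\|_p) \le \|F\|_p + \|G\|_p$, so $\|J\|_{\ell^1_2(E)\to\ell^\infty_2(E)} \le 1$, and this holds for any normed $E$. At $(r,s) = (p',p)$ the bound is precisely the classical second Clarkson inequality $\|F+G\|_p^{p'} + \|F-G\|_p^{p'} \le 2\,(\|F\|_p^p + \|G\|_p^p)^{p'/p}$, that is $\|J\|_{\ell^p_2(E)\to\ell^{p'}_2(E)} \le 2^{1/p'}$. Since $[\ell^{q_0}_2(E),\ell^{q_1}_2(E)]_\theta = \ell^{q_\theta}_2(E)$ with $1/q_\theta = (1-\theta)/q_0 + \theta/q_1$ for any fixed Banach space $E$, the complex interpolation method applied to $J$ gives, for $1/r_\theta = \theta/p'$ and $1/s_\theta = (1-\theta)+\theta/p$, the estimate $\|J\|_{\ell^{s_\theta}_2(E)\to\ell^{r_\theta}_2(E)} \le 1^{1-\theta}\,(2^{1/p'})^{\theta} = 2^{\theta/p'}$. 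A short computation gives $1/s'_\theta = \theta/p'$ and $r_\theta = (s_\theta)'$, so this is exactly (\ref{IClar}) with constant $2^{1/s'}$ along the curve $r = s'$, as $s_\theta$ ranges over $(1,p]$.

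It then remains to pass from the curve $r = s'$ to the stated region. The hypothesis $r' \le s$ is equivalent to $r \ge s'$, and $q \mapsto (\|F+G\|_p^q + \|F-G\|_p^q)^{1/q}$ is non-increasing in $q$; hence the left-hand side of (\ref{IClar}) at exponent $r$ is dominated by its value at exponent $s'$, to which the curve case applies, while the right-hand side does not involve $r$. The remaining constraint $s \le p \le r$ is automatic, since $s \le p \le 2$ forces $s' \ge p' \ge p$, whence $r \ge s' \ge p$. This closes the argument once the two endpoints are available.

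The step I expect to be the genuine obstacle is the second endpoint. The classical second Clarkson inequality is not the integral of any pointwise numerical inequality (for instance the natural pointwise candidate already fails at $F = G$), so it must be proved globally, and it is the only place where the geometry of the target enters. For $B = \mathbb{C}$ --- the case relevant to trigonometric polynomials on $S^1$ --- I would deduce it by duality from the first Clarkson inequality $\|u+v\|_q^q + \|u-v\|_q^q \le 2^{q-1}(\|u\|_q^q + \|v\|_q^q)$ for $q \ge 2$. In the Bochner-valued setting the same endpoint is available exactly when $E = L^p(X;B)$ is uniformly convex, which by the remark preceding the lemma holds precisely when $B$ is; the interpolation and monotonicity steps, by contrast, are valid for an arbitrary coefficient space $E$.
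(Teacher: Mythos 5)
Your overall architecture is sound and, as it happens, faithful to the source: the paper gives no proof of this lemma at all (it defers to Boas \cite{Boa} and Koskela \cite{Kos}), and Boas's original argument is itself an application of the Riesz convexity theorem to the map $J(F,G)=(F+G,F-G)$. Your triangle-inequality endpoint at $(r,s)=(\infty,1)$, the bookkeeping showing that interpolation of the outer exponents lands exactly on the curve $r=s'$ with constant $2^{1/s'}$, and the reduction of the full region $1<s\le p\le r$, $r'\le s$ to that curve by monotonicity of $q\mapsto(\|F+G\|_p^q+\|F-G\|_p^q)^{1/q}$ are all correct.

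The genuine gap is the step you yourself single out as the obstacle: the endpoint $(r,s)=(p',p)$ cannot be obtained "by duality from the first Clarkson inequality." Since the matrix of $J$ is symmetric, duality converts the desired bound $\|J\|_{\ell^p_2(L^p)\to\ell^{p'}_2(L^p)}\le 2^{1/p'}$ into $\|J\|_{\ell^{p}_2(L^{p'})\to\ell^{p'}_2(L^{p'})}\le 2^{1/p'}$, that is, into
\begin{align*}
\|u+v\|_{p'}^{p'}+\|u-v\|_{p'}^{p'}\;\le\; 2\,\big(\|u\|_{p'}^{p}+\|v\|_{p'}^{p}\big)^{p'/p},
\qquad u,v\in L^{p'}.
\end{align*}
By the power-mean inequality, $\big(a^{p}+b^{p}\big)^{p'/p}\le 2^{p'/p-1}\big(a^{p'}+b^{p'}\big)$, so the right-hand side here is at most $2^{p'-1}\big(\|u\|_{p'}^{p'}+\|v\|_{p'}^{p'}\big)$, with strict inequality unless $\|u\|_{p'}=\|v\|_{p'}$: the dual statement is strictly \emph{stronger} than the first Clarkson inequality, so the latter cannot imply it. (Your parenthetical is also off: the pointwise inequality $|a+b|^{p'}+|a-b|^{p'}\le 2(|a|^p+|b|^p)^{p'/p}$ does hold for complex scalars, with equality at $a=b$, and the classical proof of the second Clarkson inequality is exactly this scalar inequality combined with the reverse Minkowski inequality in $L^{p/p'}$, $p/p'\le 1$.) The repair that stays inside your own framework is to interpolate the inner and outer exponents simultaneously rather than freezing $E=L^p$: $J:\ell^1_2(L^1)\to\ell^\infty_2(L^1)$ has norm $1$ (triangle inequality) and $J:\ell^2_2(L^2)\to\ell^2_2(L^2)$ has norm $\sqrt2$ (parallelogram law), and the mixed-norm Riesz--Thorin theorem then yields $\|J\|_{\ell^{p}_2(L^{p})\to\ell^{p'}_2(L^{p})}\le 2^{1/p'}$ for all $1<p<2$; this is the missing endpoint, and it is essentially Boas's proof. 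Finally, your closing biconditional --- that the Bochner endpoint holds "exactly when $L^p(X;B)$ is uniformly convex" --- is false: uniform convexity is necessary but not sufficient. For $B=\ell^q$ with $q>p'$, the space $L^p(X;\ell^q)$ is uniformly convex by Day's theorem, yet $F=e_1+e_2$, $G=e_1-e_2$ gives $\|F\pm G\|_{\ell^q}=2$ and $\|F\|_{\ell^q}=\|G\|_{\ell^q}=2^{1/q}$, and the $(p',p)$ inequality would force $1\le 1/p+1/q$, i.e. $q\le p'$. This also shows that the lemma as stated, for an arbitrary Banach space $B$, needs qualification; fortunately the paper applies it only with $B=\mathbb{C}$.
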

For the proof, we refer to \cite{Boa} and \cite{Kos}. The classical second Clarkson inequality can be obtained by taking \(s = p\) and \(r = p'\); that is,
\begin{align}\label{IClarC}
\Big\|\frac{F+G}{2}\Big\|_p^{p'}+\Big\|\frac{F-G}{2}\Big\|_p^{p'}
\leq \Big(\frac{1}{2} \big\|F\big\|_p^{p}+\frac{1}{2}\big\|G\big\|_p^{p}\Big)^{\frac{1}{p-1}},
\end{align}

Let us add that these inequalities imply the uniform convexity of the spaces $L_p$, that is, for any $\varepsilon>0$ there is $\delta(\varepsilon)$ such that for any  $F,G \in L_p$, the conditions 
\begin{align}\label{Ref}
\|F\|_p=\|G\|_p=1, \|F-G\|_p \geq \varepsilon,
\end{align}

imply 
$$ \Big\|\frac{F+G}{2}\Big\|_p<1-\delta(\varepsilon),$$
with 
$$\delta(\varepsilon)=\begin{cases}
1 -[1-\big(\frac{\varepsilon}{2}\big)^p]^\frac{1}{p} &\textrm{~~if~~} p \geq 2,\\
1 - [1-\big(\frac{\varepsilon}{2}\big)^{p'}]^\frac{1}{p'} &\textrm{~~if~~} 1<p \leq 2.
\end{cases}
$$

We recall also the following two lemmas from \cite{L}.
\begin{lem}\label{LFIee1}Let $F$ be a measurable functions on $S^1$ such that 
\begin{enumerate}
\item \(\big\|F\big\|_2=1\),
\item \(\big\|F\big\|_1 >1-\varepsilon\), for some $0<\varepsilon<1$.
\end{enumerate}
Then there is a $\zeta_2$ such that the set $E=\Big\{z~~:~~ \big|F(z)\big| <1-\zeta_2\Big\}$ has a Lebesgue measure stricly less than $\zeta$, where $|\zeta|, |\zeta_2|<B\varepsilon^A$, for some $A,B>0$.
\end{lem}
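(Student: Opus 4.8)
The plan is to read Lemma~\ref{LFIee1} as a quantitative (stability) form of the Cauchy--Schwarz inequality. On $S^1$ with the normalized Lebesgue measure one always has $\big\|F\big\|_1 \le \big\|F\big\|_2\,\big\|1\big\|_2 = \big\|F\big\|_2$, and the two hypotheses say precisely that this inequality is \emph{nearly} an equality. Near-equality in Cauchy--Schwarz forces $|F|$ to be nearly constant, and since $\big\|F\big\|_2=1$ that constant must equal $1$; the entire content of the lemma is to turn this heuristic into an explicit power of $\varepsilon$.

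First I would record the exact $L^2$ identity, valid because $\int_{S^1} dz = 1$,
$$
\int_{S^1}\big(|F(z)|-1\big)^2\,dz=\big\|F\big\|_2^2-2\big\|F\big\|_1+1=2\big(1-\big\|F\big\|_1\big),
$$
where the last step uses hypothesis $(1)$, i.e. $\big\|F\big\|_2^2=1$. Hypothesis $(2)$ then gives at once
$$
\int_{S^1}\big(|F(z)|-1\big)^2\,dz<2\varepsilon .
$$
This single estimate carries all the information; everything else is a Chebyshev-type deduction.

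Next, on the set $E=\{z:\ |F(z)|<1-\zeta_2\}$ one has $\big(|F(z)|-1\big)^2>\zeta_2^2$, so
$$
\zeta_2^2\,\big|E\big|\le\int_{E}\big(|F(z)|-1\big)^2\,dz\le\int_{S^1}\big(|F(z)|-1\big)^2\,dz<2\varepsilon ,
$$
whence $\big|E\big|<2\varepsilon/\zeta_2^2$. It then remains only to pick the threshold $\zeta_2$ as a suitable power of $\varepsilon$ so that both $\zeta_2$ and the resulting measure bound are of the required form $B\varepsilon^A$. Choosing, for instance, $\zeta_2=\varepsilon^{1/4}$ yields $\big|E\big|<2\varepsilon^{1/2}=:\zeta$, and then the single pair $A=1/4$, $B=2$ controls both $\zeta_2$ and $\zeta$ once $0<\varepsilon<1$ (since $\varepsilon^{1/2}\le\varepsilon^{1/4}$ there).

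I do not expect a genuine obstacle here: the argument reduces to an elementary expansion of $(|F|-1)^2$ followed by Markov's inequality, and the only design choice is to balance the exponent in $\zeta_2=\varepsilon^{\theta}$ against the exponent $1-2\theta$ appearing in the bound $\big|E\big|<2\varepsilon^{1-2\theta}$. Any $0<\theta<1/2$ makes both exponents positive; the one point to watch is that the lemma requests a \emph{single} pair $(A,B)$ bounding $\zeta$ and $\zeta_2$ simultaneously, so after fixing $\theta$ (e.g. $\theta=1/4$) one simply takes $A=\min(\theta,1-2\theta)$ and an appropriate $B$.
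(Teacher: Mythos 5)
Your proof is correct. It is worth noting, however, that the paper itself offers no proof of Lemma~\ref{LFIee1}: the lemma is recalled from Littlewood's paper \cite{L}, followed only by the one-line remark that ``an alternative proof can be given using the fact that convergence in $L^1$ implies convergence in measure.'' What you have written is essentially that alternative proof, carried out quantitatively and self-contained. Your key identity
\[
\int_{S^1}\big(|F(z)|-1\big)^2\,dz=\|F\|_2^2-2\|F\|_1+1=2\big(1-\|F\|_1\big)<2\varepsilon
\]
converts the near-equality $\|F\|_1>1-\varepsilon$ into $L^2$-closeness of $|F|$ to the constant $1$, and Markov's inequality then yields $|E|<2\varepsilon/\zeta_2^2$; your choices $\zeta_2=\varepsilon^{1/4}$, $\zeta=2\varepsilon^{1/2}$, $A=1/4$, $B=2$ do satisfy $|\zeta|,|\zeta_2|<B\varepsilon^A$ strictly for $0<\varepsilon<1$, as required. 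Compared with the bare citation, your argument buys explicit exponents and constants, and it makes transparent that the lemma is a stability statement for the inequality $\|F\|_1\le\|F\|_2\,\|1\|_2$; it is also consistent with the paper's own toolkit, since essentially the same identity (expanding $\big\||P_m|-1\big\|_2^2$ to relate it to the $L^1$-norm) is what the author uses in the proof of Theorem~\ref{TH2} to upgrade $L^1$-flatness to almost everywhere flatness along a subsequence.
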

An alternative proof of Lemma \ref{LFIee1} can be given using the fact that convergence in $L^1$ implies convergence in measure.

\begin{lem}\label{LFI-e-e-2}Let $F$ be a measurable functions on $S^1$ such that 
\begin{enumerate}
\item \(\big\|F\big\|_2=1\),
\item \(\big\|F\big\|_1 <1-a\), with $a \in (0,1)$.
\end{enumerate}
Then the set $E=\Big\{z~~:~~ \big|F(z)\big| <1-\frac{a}{2}\Big\}$ has a Lebesgue measure great than $\frac{\frac{a}{2}}{1-\frac{a}{2}}.$
\end{lem}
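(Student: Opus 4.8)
The plan is to establish the bound by a direct distributional (Chebyshev-type) argument, using only the $L^1$-hypothesis together with the normalization of the ambient measure, so that $|S^1| = 1$. First I would split the circle as $S^1 = E \cup E^c$, where
\[
E^c = \Big\{ z \;:\; \big|F(z)\big| \geq 1 - \tfrac{a}{2} \Big\},
\]
and record that, by normalization, $|E^c| = 1 - |E|$.

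The key step is to bound $\|F\|_1$ from below by discarding the nonnegative contribution coming from $E$ and invoking the pointwise lower bound $|F| \geq 1 - \frac{a}{2}$ that holds by definition on $E^c$:
\[
\|F\|_1 = \int_E \big|F\big|\,dz + \int_{E^c} \big|F\big|\,dz \;\geq\; \int_{E^c} \big|F\big|\,dz \;\geq\; \Big(1 - \tfrac{a}{2}\Big)\,\big(1 - |E|\big).
\]
Combining this with the hypothesis $\|F\|_1 < 1 - a$ yields $\big(1 - \frac{a}{2}\big)\big(1 - |E|\big) < 1 - a$. Solving the resulting linear inequality for $|E|$ then gives
\[
|E| \;>\; 1 - \frac{1-a}{1 - \frac{a}{2}} \;=\; \frac{\frac{a}{2}}{1 - \frac{a}{2}},
\]
which is precisely the claimed estimate.

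I do not expect any genuine obstacle here; the argument is elementary and self-contained. The one subtlety worth flagging is that the $L^2$-normalization $\|F\|_2 = 1$ is not actually used in this particular estimate — it merely fixes the scale for the surrounding discussion — so I would be careful to invoke only $\|F\|_1 < 1-a$ and the fact that the total mass of the normalized Lebesgue measure on $S^1$ equals $1$. I would also check that the strictness propagates correctly through the chain: the strict hypothesis $\|F\|_1 < 1-a$ feeds through the displayed inequalities to deliver the strict conclusion $|E| > \frac{\frac{a}{2}}{1 - \frac{a}{2}}$, as required.
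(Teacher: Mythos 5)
Your proof is correct, and it is essentially the paper's own argument: the paper simply remarks that this lemma ``follows easily from the classical Markov inequality,'' and your decomposition of $S^1$ into $E$ and $E^c$ with the pointwise bound $|F|\geq 1-\frac{a}{2}$ on $E^c$ is exactly that Markov estimate written out. Your observation that the hypothesis $\|F\|_2=1$ is not needed for this particular bound is also accurate.
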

Let us notice that Lemma \ref{LFI-e-e-2} follows easily from the classical Markov inequality.

We are now able to proceed with the proof of Theorem \ref{main1}
{\footnote{\textbf{This proof was revised and completed during my stay in Chefchaouen, located in the mountains of the Rif. The approach used can be referred to as the 'Rif method.'}}}.
\begin{proof}[\textbf{Proof of Theorem \ref{main1}}]We proceed by contradiction. Assume that the sequence \( (P_{\mathbf{a},n}) \) is \( L^\alpha \)-flat for some \( \alpha> 0 \). Without loss of generality, we assume that \(0< \alpha < 2\). For simplicity of exposition, we set 
\[
f_n(z) := P_{\mathbf{a},n}(z) = g_n(z) + i h_n(z), \quad \text{and} \quad c_j := e^{i \phi_j}, \quad j = 1, 2, \dots.
\]
Then, for \( z = e^{it} \), we have
\begin{align*}
g_n(z) &= \sum_{m=1}^{n} a_m \cos(mt + \phi_m), \\
h_n(z) &= \sum_{m=1}^{n} a_m \sin(mt + \phi_m).
\end{align*}

Moreover, we clearly have
\begin{align}\label{Sq-1}
|f_n(z)|^2 = g_n(z)^2 + h_n(z)^2,
\end{align}
and
\begin{align}\label{Sq-2}
\|g_n\|_2^2 = \|h_n\|_2^2 = \frac{\|f_n\|_2^2}{2} = \frac{\ds \sum_{m=1}^{n} a_m^2}{2} .
\end{align}
It follows from \eqref{Sq-1} and \eqref{Sq-2}  that
\begin{align}
\left| \frac{f_n(z)}{\|f_n\|_2} \right|^2 
&= \frac{1}{2} \left( \frac{\sqrt{2} \, g_n(z)}{\|f_n\|_2} \right)^2 
+ \frac{1}{2} \left( \frac{\sqrt{2} \, h_n(z)}{\|f_n\|_2} \right)^2 \\
&= \frac{1}{2} \left( \frac{g_n(z)}{\|g_n\|_2} \right)^2 
+ \frac{1}{2} \left( \frac{h_n(z)}{\|h_n\|_2} \right)^2.
\end{align}

Define the normalized functions
\[
\tilde{f}_n(z) := \frac{f_n(z)}{\|f_n\|_2}, \quad 
\tilde{g}_n(z) := \frac{g_n(z)}{\|g_n\|_2}, \quad 
\tilde{h}_n(z) := \frac{h_n(z)}{\|h_n\|_2}.
\]
Then we obtain the identity
\begin{align}\label{Norma-1}
\tilde{f}_n(z)=\frac{\tilde{g}_n(z)}{\sqrt{2}}+i\frac{\tilde{h}_n(z)}{\sqrt{2}}
\end{align}
Hence 
\begin{align}\label{Norma-2}
|\tilde{f}_n(z)|^2 = \frac{1}{2} \tilde{g}_n(z)^2 + \frac{1}{2} \tilde{h}_n(z)^2,
\end{align}
along with the norm equalities
\[
\|\tilde{f}_n\|_2 = \|\tilde{g}_n\|_2 = \|\tilde{h}_n\|_2 = 1.
\]
Assuming further that \( 1 < \alpha < 2 \), we can apply Lemma \ref{Clar} with $F=\frac{\tilde{g}_n(z)}{\sqrt{2}}$ and $G=i\frac{\tilde{h}_n(z)}{\sqrt{2}}$. We  can thus write 
\begin{align}\label{Cvg0-0}
\Big(2 \Big\|\tilde{f}_n\Big\|_\alpha^r\Big)^{\frac{1}{r}} 
&\leq 2^{\frac{1}{s'}}\Big(\big\|\tfrac{\tilde{g}_n(z)}{\sqrt{2}}\big\|_\alpha^{s}
    +\big\|\tfrac{\tilde{h}_n(z)}{\sqrt{2}}\big\|_\alpha^{s}\Big)^{\frac{1}{s}} \\
&\leq \frac{2^{\frac{1}{s'}}}{\sqrt{2}} 
   \Big(\big\|\tilde{g}_n(z)\big\|_\alpha^{s}
    +\big\|\tilde{h}_n(z)\big\|_\alpha^{s}\Big)^{\frac{1}{s}}
\end{align}
But, under our assumption, we have
\begin{align}\label{Cvg0-2}
\int_{S^1} |\tilde{f}_{n}(z)|^\alpha \, dz \xrightarrow[k \to \infty]{} 1.
\end{align}

Therefore, by letting $n \to \infty$ and appealing to Lemma \ref{LFI}, we deduce from \eqref{Cvg0-0} the following
\begin{align}\label{Cvg0-1}
2^{\frac{1}{r}} &\leq \frac{2^{\frac{1}{s'}+\frac{1}{s}}}{\sqrt{2}}  (1-A(K,\alpha))\\
&\leq \sqrt{2} (1-A(K,\alpha))
\end{align}
Let $r=2+\delta,$ with $0<\delta<1$. Then

$$2^{\frac{1}{r}}=2^{\frac{1}{2}+\psi(\delta)}.$$
Since $\frac{1}{2+\delta}=\frac{1}{2}.\sum_{n \geq 0}\frac{(-\delta)^n}{2^n}.$
This combined with \eqref{Cvg0-1} gives
\begin{eqnarray}\label{Cvg0-3}
2^{\psi(\delta)} \leq   (1-A(K,\alpha)).
\end{eqnarray}
By letting $\delta \to 0$, we obtain a contradiction, since $\psi(\delta) \to 0$ as
$\delta \to 0$. This completes the proof of the theorem.
\end{proof}

\begin{rem}D.~J.~ Newman make the following observation \cite{New}.\\
" $\cdots$ it follows from the work of Paley (see Zygmund [4]) that, in a certain sense, most $n-th$ degree polynomials with coefficients of modulus $1$ have $L^4$ norms which are close to $2^{\frac{1}{4}} \sqrt{n}$."
\end{rem}
Notice that by Theorem \ref{main1}, there is a constant $C>1$ such that for any $n \in \N$, there is $m \geq n$ such that 
$$ \Big\|P_{\mathbf{c},m}\Big\|_4 > C \sqrt{m},$$
where $\mathbf{c}$ is an unimodulair sequence and $(P_{\mathbf{c},n})$  its associated sequence of analytic polynomials. But, we are not able to compute such that constant. We ask on how much it is close to $2^{\frac{1}{4}}$. We further make the following conjecture
\begin{conj}
There is an absolute constant $k>0$ such that, for any $\alpha >2$,
 $$\limsup_{n \to \infty}\Big\|\frac{P_{\mathbf{c},n}}{\sqrt{n}}\Big\|_\alpha \geq  k .{2^\frac{1}{\alpha}}.$$
\end{conj}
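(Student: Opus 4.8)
The plan is to reduce the problem to the one–dimensional estimates already available through Lemmas \ref{LF} and \ref{LFI}, and to supply the extra factor $2^{1/\alpha}$ by a convexity (power–mean) argument applied to $|P_{\mathbf c,n}|^2$. Write $f_n = P_{\mathbf c,n}$, normalise by $\tilde f_n = f_n/\|f_n\|_2 = P_{\mathbf c,n}/\sqrt n$, and split $\tilde f_n = \tfrac{1}{\sqrt2}\tilde g_n + \tfrac{i}{\sqrt2}\tilde h_n$ exactly as in the proof of Theorem \ref{main1}, so that $|\tilde f_n|^2 = \tfrac12(\tilde g_n^2 + \tilde h_n^2)$ with $\|\tilde g_n\|_2 = \|\tilde h_n\|_2 = 1$. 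For $\alpha > 2$ put $q = \alpha/2 > 1$; since $(a+b)^q \ge a^q + b^q$ for $a,b \ge 0$, integrating the pointwise inequality $(\tilde g_n^2 + \tilde h_n^2)^{q} \ge |\tilde g_n|^{\alpha} + |\tilde h_n|^{\alpha}$ yields
\[
\|\tilde f_n\|_\alpha^2 = \tfrac12\,\big\|\tilde g_n^2 + \tilde h_n^2\big\|_{q} \ge \tfrac12\big(\|\tilde g_n\|_\alpha^{\alpha} + \|\tilde h_n\|_\alpha^{\alpha}\big)^{2/\alpha}.
\]

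First I would observe that this already settles the \emph{existence} of an absolute $k$: by monotonicity of the $L^p$–norms on the probability space $(S^1,dz)$ one has $\|\tilde g_n\|_\alpha,\|\tilde h_n\|_\alpha \ge \|\tilde g_n\|_2 = 1$, whence $\|\tilde f_n\|_\alpha \ge 2^{1/\alpha - 1/2}$ for every $n$, which is the asserted bound with $k = 2^{-1/2}$. The substance of the conjecture is therefore the \emph{size} of $k$; the Remark preceding it (Paley's value $2^{1/4}$ at $\alpha=4$) indicates that the truth should be $k$ close to $1$.

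To push $k$ upward I would feed in the non-flatness of the two factors. Because $h_n$ is obtained from $g_n$ by the phase shift $\phi_m \mapsto \phi_m - \pi/2$, Lemmas \ref{LF} and \ref{LFI} apply verbatim and give, for $\alpha>2$, $\limsup_n \|\tilde g_n\|_\alpha \ge 1 + A(K,\alpha)$ and $\limsup_n \|\tilde h_n\|_\alpha \ge 1 + A(K,\alpha)$. Passing to a subsequence along which $\|\tilde g_n\|_\alpha$ tends to its $\limsup$ and retaining only the trivial bound $\|\tilde h_n\|_\alpha \ge 1$ on the other factor, the displayed inequality upgrades to $\limsup_n \|\tilde f_n\|_\alpha \ge 2^{-1/2}\big((1+A(K,\alpha))^{\alpha} + 1\big)^{1/\alpha}$, so one may take $k = 2^{-1/2}\inf_{\alpha>2}\big((1+A(K,\alpha))^{\alpha}+1\big)^{1/\alpha}/2^{1/\alpha}$, an absolute positive constant since the infimum is $\ge 1$. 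A sharper route replaces the pointwise convexity step by the reverse (range $p\ge 2$) companion of the Boas inequality of Lemma \ref{Clar}, applied with $F = \tfrac{1}{\sqrt2}\tilde g_n$ and $G = \tfrac{i}{\sqrt2}\tilde h_n$; since $|F+G| = |F-G| = |\tilde f_n|$, its left-hand side collapses to $2^{1/r}\|\tilde f_n\|_\alpha$, and optimising the exponent pair $(r,s)$ recovers a constant strictly larger than $2^{-1/2}$.

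The main obstacle is the \emph{sharp} constant. Every elementary inequality above is lossy precisely where $|\tilde f_n|\approx 1$ — there $(a+b)^{q} = 2^{q}$ while $a^{q}+b^{q}=2$ — and this is exactly the factor $2^{1/2}$ separating the bound $2^{1/\alpha-1/2}$ from the conjectured $2^{1/\alpha}$. Closing it seems to demand distributional rather than pointwise information: for a ``generic'' unimodular sequence the Salem--Zygmund central limit theorem forces $|P_{\mathbf c,n}|^2/n$ to converge in law to an exponential variable, whence $\|\tilde f_n\|_\alpha \to \Gamma(\tfrac{\alpha}{2}+1)^{1/\alpha}$, which at $\alpha=4$ is exactly $2^{1/4}$. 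The hard part will be to turn this heuristic into a \emph{universal} lower bound valid for \emph{all} unimodular sequences obeying the variance hypothesis of Theorem \ref{main1}, not merely random ones, and to check that the resulting $k$ can be chosen independently of $\alpha$; reconciling the exponential-law prediction near $\alpha=2$, where it degenerates to the trivial value $1$, with a uniform $k$ is the delicate point.
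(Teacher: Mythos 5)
First, a point of order: the paper does not prove this statement at all. It is explicitly a conjecture, introduced immediately after the author concedes ``we are not able to compute such that constant,'' so there is no in-paper argument to compare yours against; I can only judge the proposal on its own terms. Judged so, your opening argument is correct and does prove the statement as literally worded. The decomposition $\tilde f_n=\tfrac{1}{\sqrt2}\tilde g_n+\tfrac{i}{\sqrt2}\tilde h_n$ (the same frame as the paper's proof of Theorem \ref{main1}; multiply by $z$ first to remove the constant term so that $\|\tilde g_n\|_2=\|\tilde h_n\|_2=1$ holds exactly), the superadditivity $(a+b)^{q}\ge a^{q}+b^{q}$ with $q=\alpha/2\ge 1$, and the monotonicity $\|\cdot\|_2\le\|\cdot\|_\alpha$ on the probability space $(S^1,dz)$ are all valid, and yield $\|\tilde f_n\|_\alpha\ge 2^{1/\alpha-1/2}$ for every $n$, i.e. the conjecture with $k=2^{-1/2}$. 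Note, though, that your observation can be compressed further: since $2^{1/\alpha}\le\sqrt2$ for $\alpha\ge2$, the single inequality $\|\tilde f_n\|_\alpha\ge\|\tilde f_n\|_2=1$ already gives $\|\tilde f_n\|_\alpha\ge 2^{-1/2}\cdot 2^{1/\alpha}$ with no decomposition whatsoever. What you have really shown is that the conjecture, as quantified (``there is an absolute $k>0$''), is vacuously weak, and you prove exactly that weak reading while correctly identifying (in light of the paper's preceding Remark on Paley) that the intended content is a constant close to $1$.

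On that substantive version, one of your proposed upgrades is valid but toothless, one is unsubstantiated, and your own heuristic in fact closes the question of the uniform constant. (i) The refinement via Lemmas \ref{LF} and \ref{LFI} is sound for each fixed $\alpha$, but it cannot produce a uniform $k>2^{-1/2}$, because $A(K,\alpha)$ necessarily degenerates as $\alpha\to2^{+}$ (at $\alpha=2$ the ratio $\|g_n\|_\alpha/\|g_n\|_2$ is identically $1$); your formula for $k$ as an infimum over $\alpha>2$ therefore collapses back to $2^{-1/2}$. (ii) The claim that the reversed ($p\ge2$) companion of Lemma \ref{Clar}, with optimized exponents, gives a constant strictly larger than $2^{-1/2}$ is not borne out: carrying it out with your $F,G$ (for which $|F+G|=|F-G|=|\tilde f_n|$), the reversed second Clarkson inequality gives $2^{1-p'}\|\tilde f_n\|_p^{p'}\ge\bigl(\tfrac12\|F\|_p^{p}+\tfrac12\|G\|_p^{p}\bigr)^{1/(p-1)}\ge 2^{-p'/2}$, i.e. exactly $\|\tilde f_n\|_p\ge 2^{1/p-1/2}$ again. (iii) Most importantly, where your exponential-law heuristic is rigorous (Borwein--Lockhart type results: $\mathbb{E}\|P_{\mathbf c,n}\|_\alpha^{\alpha}/n^{\alpha/2}\to\Gamma(\alpha/2+1)$ for random unimodular coefficients, with almost-sure versions), it shows $2^{-1/2}$ is \emph{optimal}: for such a sequence $\limsup_n\|P_{\mathbf c,n}/\sqrt n\|_\alpha=\Gamma(\alpha/2+1)^{1/\alpha}\to 1$ as $\alpha\to2^{+}$, while $k\cdot 2^{1/\alpha}\to k\sqrt2$, forcing $k\le 2^{-1/2}$. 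So the uniform-in-$\alpha$ statement is completely settled by the trivial bound and cannot be improved; the genuinely open problem, which your proposal leaves open and honestly flags, is the fixed-$\alpha$ question, e.g. whether $\limsup_n\|P_{\mathbf c,n}/\sqrt n\|_\alpha\ge(1-o(1))\,\Gamma(\alpha/2+1)^{1/\alpha}$ for every unimodular sequence $\mathbf c$.
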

At this point, let us make some remarks on the previous proof. This proof  can be simplified for some cases. Indeed, since
\[
\left\|\frac{|\tilde{g}_n| - |\tilde{h}_n|}{2}\right\|_\alpha \leq 1.
\]
we can extract a subsequence \( (n_k) \) such that
\[
\left\|\frac{|\tilde{g}_{n_k}| - |\tilde{h}_{n_k}|}{2}\right\|_\alpha \xrightarrow[k \to \infty]{} \ell \geq 0.
\]

If \( \ell = 0 \), we may extract a further subsequence—still denoted \( (n_k) \) such that for almost every \( z \in S^1 \) (with respect to Lebesgue measure), we have
\begin{align}\label{Cvg0}
\left||\tilde{g}_{n_k}(z)| - |\tilde{h}_{n_k}(z)|\right| \xrightarrow[k \to \infty]{} 0.
\end{align}

Moreover, under the assumption of \( L^\alpha \)-flatness, we have
\begin{align}\label{Cvg01}
\int_{S^1} \left| |\tilde{f}_{n_k}(z)| - 1 \right|^\alpha \, dz \xrightarrow[k \to \infty]{} 0.
\end{align}

Therefore,  we may extract once again a subsequence—still denoted \( (n_k) \)—such that for almost every \( z \in S^1 \)

\begin{align}\label{Cvg1}
 |\tilde{f}_{n_k}(z)| \xrightarrow[k \to \infty]{} 1.
\end{align}

Let us denote by \( E_0 \) (respectively \( E_1 \)) the full-measure subsets of \( S^1 \) on which the convergence in \eqref{Cvg0} (respectively \eqref{Cvg1}) holds. Let \( \varepsilon > 0 \). Then for any \( z \in E := E_0 \cap E_1 \), there exists \( k_0 \) such that for all \( k > k_0 \),
\[
\left||\tilde{g}_{n_k}(z)| - |\tilde{h}_{n_k}(z)|\right| < \varepsilon.
\]
It follows that
\[
|\tilde{h}_{n_k}(z)| < |\tilde{g}_{n_k}(z)| + \varepsilon.
\]

Define the measurable sets
\[
E_{n,1} := \left\{ z \in E : |\tilde{g}_{n_k}(z)| < 1 - \frac{A(K, \alpha)}{2} \right\}, 
\]
\[
E_{n,2} := \left\{ z \in E : |\tilde{h}_{n_k}(z)| < 1 - \frac{A(K, \alpha)}{2} \right\},
\]
and set
\[
E_1 := \limsup E_{n,1}, \quad E_2 := \limsup E_{n,2}.
\]

Then, by Lemma \ref{LFI-e-e-2}, the Lebesgue measures of \( E_1 \) and \( E_2 \) are greater than \( \frac{a/2}{1 - a/2} \). Therefore, for infinitely many \( k \), we have
\begin{align*}
|\tilde{f}_{n_k}(z)|^2 &= \frac{1}{2} |\tilde{g}_{n_k}(z)|^2 + \frac{1}{2} |\tilde{h}_{n_k}(z)|^2 \\
&< \frac{1}{2}\left( \left(1 - \frac{A(K, \alpha)}{2}\right)^2 + \left(|\tilde{g}_{n_k}(z)| + \varepsilon\right)^2 \right) \\
&< \left(1 - \frac{A(K, \alpha)}{2}\right)^2 + \varepsilon + \frac{\varepsilon^2}{2}.
\end{align*}

Letting \( k \to \infty \) and then \( \varepsilon \to 0 \), we deduce
\[
\limsup_{k \to \infty} |\tilde{f}_{n_k}(z)|^2 \leq \left(1 - \frac{A(K, \alpha)}{2}\right)^2 < 1,
\]
which contradicts the \( L^\alpha \)-flatness of \( (\tilde{f}_{n_k}) \). This contradiction rules out the case \( \ell = 0 \).
However, our arguments break down in the case $\ell > 0$. Thus, we asked whether Lemma 5 could be used to produce a more straightforward proof of our main result.

\section{An application to the spectral theory of dynamical systems: the spectral types of the operators \( U_T \) and \( V_\phi \) are singular, where \( T \) is an odometer and \( \phi \) is a cocycle taking values in \( \pm 1 \).}

We consider a rank one ergodic transformation $T$
on the unit interval (equipped with Lebesgue measure $\lambda$) and a function $\phi$ on $[0,1]$ taking values  $-1$ and $1$ such that the maximal spectral type of the unitary operator $V = V_\phi$ defined by
   $$(V_\phi f)(x)   = \phi (x) f(Tx), f \in L^2([0,1], \lambda)$$
has simple spectrum with maximal spectral type $\sigma_\phi$ given up to discret part by the following generalized Riesz product
$$\sigma_\phi \stackrel{W^*-\lim}{=}\prod_{n=1}^\infty |P_{n}|^2,$$
where $W^*-\lim$ is the weak-star topology on the set of the probabilities measures on the circle, and
$$P_n(z)=\frac{1}{\sqrt{n}}\sum_{k=0}^{n-1}\epsilon_n(k)z^k, ~~~~\epsilon_n(k) \in \big\{\pm 1\big\}.$$
The transformation $T$ is an odometer constructed inductively (see \cite{Nad} and \cite{Abd-Nad-etds}) and the operator $U_T$ is defined by 
$$(U_T f)(x)   = f(Tx), f \in L^2([0,1], \lambda).$$

It is well know that the spectrum of $T$ is discret, hence singular, and it is follows from Helson's theorem that for such a $T$ the maximal spectral type  of $V_\phi$ is either singular to Lebesgue measure or equivalent to Lebesgue measure \cite{Helson} (see also \cite[p.113]{Nad}). In particular the
maximal spectral type of   $V_\phi$ will be either singular to Lebesgue measure or equivalent to it.

M. Guenais proved that $\sigma_\phi$ is singular if the sequence $(P_n(z))$ is not $L^1$-flat \cite{G} . Therefore, as a consequence of our main theorem, we have
\begin{Cor}The spectral type of $V_\phi$ is singular. 
\end{Cor}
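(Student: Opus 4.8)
The plan is to chain together two results that are already in hand: Guenais's theorem, which says that the maximal spectral type $\sigma_\phi$ is singular whenever the associated sequence of polynomials $(P_n)$ fails to be $L^1$-flat, and the author's main theorem (specifically the corollary resolving the Erd\H{o}s--Littlewood conjecture), which guarantees exactly this failure of $L^1$-flatness. So the proof is essentially a citation-and-assembly argument rather than a new computation.

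Concretely, I would first recall that the polynomials governing the spectral type are $P_n(z)=\frac{1}{\sqrt{n}}\sum_{k=0}^{n-1}\epsilon_n(k)z^k$ with $\epsilon_n(k)\in\{\pm1\}$, i.e.\ they are $L^2$-normalized Littlewood polynomials. The coefficients $a_m:=\epsilon_n(m)$ are real and satisfy $|a_m|=1$, so the verification of Littlewood's hypothesis reduces to the elementary estimate already established in the first corollary:
\[
\sum_{m=1}^{n}a_m^2=\sum_{m=1}^{n}1\leq \frac{3}{n^2}\sum_{m=1}^{n}m^2=\frac{3}{n^2}\sum_{m=1}^{n}m^2 a_m^2,
\]
using $\sum_{m=1}^n m^2\geq n^3/3$. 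Thus the hypothesis of Theorem~\ref{main1} holds with $K=3$, and we may invoke it (or directly the corollary resolving the Erd\H{o}s--Littlewood conjecture) to conclude that the sequence $(P_n)$ is not $L^\alpha$-flat for any $\alpha>0$; in particular it is not $L^1$-flat.

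Having secured the failure of $L^1$-flatness, I would then apply Guenais's theorem \cite{G}, which states precisely that non-$L^1$-flatness of $(P_n)$ forces $\sigma_\phi$ to be singular with respect to Lebesgue measure. Combining this with the Helson dichotomy recalled in the text---namely that for an odometer $T$ the maximal spectral type of $V_\phi$ is either equivalent to Lebesgue measure or singular to it---rules out the absolutely continuous alternative and pins down singularity. This yields the statement that the spectral type of $V_\phi$ is singular.

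The argument has no genuine obstacle of its own, since every ingredient is imported: the real work was done in establishing Theorem~\ref{main1}. The only point requiring care is a bookkeeping one, namely confirming that the polynomials appearing in Guenais's generalized Riesz product are indeed of the Littlewood form to which the main theorem applies, and that the $L^1$ case (the borderline $\alpha=1<2$) is covered---which it is, since Theorem~\ref{main1} delivers non-flatness for \emph{all} $p>0$. Once that compatibility is checked, the corollary follows immediately.
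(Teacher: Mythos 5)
Your proposal is correct and follows essentially the same route as the paper: the paper's proof is precisely the citation-and-assembly argument of invoking Guenais's theorem (singularity of $\sigma_\phi$ when $(P_n)$ is not $L^1$-flat) together with the corollary of Theorem~\ref{main1} that Littlewood polynomials $\frac{1}{\sqrt{n}}\sum_{k=0}^{n-1}\epsilon_n(k)z^k$, $\epsilon_n(k)\in\{\pm 1\}$, are not $L^\alpha$-flat for any $\alpha>0$. Your additional verification of the Littlewood hypothesis with $K=3$ and the mention of the Helson dichotomy are consistent with (indeed drawn from) the paper's own setup, so there is nothing to correct.
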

The ultraflat polynomials $(Q_n)$ constructed in \cite{Abd-Nad-etds} are the following from 
$$Q_n(z)= -a + \sum_{j=1}^{n-1}a^{j-1}(1 -a^2)z^j, n= 1,2,\cdots, a \in (0,1).$$
It is easy to check that Littelwood condition in Theorem \ref{main1} is not statisfy. In connection with those polynomails, M. G. Nadkarni suggest to me the following question.
\begin{que}Is it possible to produce an $L^\alpha$-flat polynomials with all coefficients non-negative and bounded away from 1?
\end{que}
The latter question is closely related to a widely discussed problem concerning the almost everywhere flatness of polynomials with all coefficients non-negative and bounded away from 1 \cite[Remark 1]{Abd-Nad}. It is shown there that such polynomials cannot be ultraflat. 
\section{An application to Number Theory.}
In this section, we apply our main result to the well-known exponential sum involving the Liouville function. We recall that $\bml(n)$ is defined as $1$ if the number of prime factors of $n$ counted with multiplicities is even, $-1$ if not and $\bml(1)=1$.  This function plays a significant role in number theory, as illustrated by the classical theorem of Littlewood, which states that the Riemann Hypothesis is equivalent to the bound $\sum_{k \leq x} \boldsymbol{\lambda}(k) = \mathcal{O}_\varepsilon(x^{1/2 + \varepsilon})$ for all $\varepsilon > 0$  \cite[p. 371]{T}. Here, as a consequence of our main result, we obtain the following:
\begin{Cor}\label{RH}For any $\alpha>0$, there exists a constant $C_\alpha>0$ such that, For any infinitely many $N$,
	$$
	\begin{array}{ll}
	\ds \Big\|\sum_{k=1}^{N}\bml(k)z^k\Big\|_\alpha \leq \big(1-C_\alpha\big)\sqrt{N}, & \hbox{ if $\alpha<2$;} \\
	\ds \Big\|\sum_{k=1}^{N}\bml(k)z^k \Big\|_\alpha \geq \big(1+C_\alpha\big)\sqrt{N}, & \hbox{ if $\alpha>2$.}
	\end{array}
	$$
In particular
$$\sup_{|z|=1}\Big|\sum_{k=1}^{N}\bml(k)z^k\Big| \geq \big(1+C_\alpha\big)\sqrt{N}.$$
\end{Cor}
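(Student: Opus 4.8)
The plan is to recognize $\sum_{k=1}^{N}\bml(k)z^{k}$ as exactly the analytic polynomial attached, in the sense of Theorem~\ref{main1}, to the unimodular sequence $c_k=\bml(k)$ with weights $a_k\equiv 1$. First I would record the arithmetical input: since $\bml(k)\in\{\pm1\}$ we have $\bml(k)^2=1$, so $\sum_{m=1}^{n}a_m^2=n$ and, by the computation already used in the first corollary, $n\le \tfrac{3}{n^2}\sum_{m=1}^{n}m^2=\tfrac{3}{n^2}\sum_{m=1}^{n}m^2 a_m^2$. Hence the hypothesis of Theorem~\ref{main1} and of Lemmas~\ref{LF} and~\ref{LFI} holds with $K=3$. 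By Parseval $\|\sum_{k=1}^N\bml(k)z^k\|_2=\sqrt N$, so writing $\tilde P_N$ for the $L^2$-normalization and $\tilde g_N,\tilde h_N$ for the $L^2$-normalized real and imaginary parts, it suffices to bound $\|\tilde P_N\|_\alpha$ away from $1$ on the correct side.

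For $1<\alpha<2$ I would reproduce the mechanism of the proof of Theorem~\ref{main1}: apply the Boas form of Clarkson's second inequality (Lemma~\ref{Clar}) to $F=\tilde g_N/\sqrt2$ and $G=i\tilde h_N/\sqrt2$, so that $F+G=\tilde P_N$ and $F-G=\overline{\tilde P_N}$ share the same $L^\alpha$-norm, and then insert Lemma~\ref{LF} for $\tilde g_N$ and Lemma~\ref{LFI} for $\tilde h_N$, which give $\limsup_N\|\tilde g_N\|_\alpha\le 1-A(K,\alpha)$ and likewise for $\tilde h_N$. As these $\limsup$ bounds force $\|\tilde g_N\|_\alpha,\|\tilde h_N\|_\alpha\le 1-A+o(1)$ for all large $N$, the combination produces a constant $C_\alpha>0$ with $\|\tilde P_N\|_\alpha\le 1-C_\alpha$ for all large $N$, i.e. $\|\sum_{k=1}^N\bml(k)z^k\|_\alpha\le(1-C_\alpha)\sqrt N$. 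For $0<\alpha\le1$ the same conclusion is free from monotonicity of $L^p$-norms on the probability space $S^1$, namely $\|\tilde P_N\|_\alpha\le\|\tilde P_N\|_{3/2}\le 1-C_{3/2}$.

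For $\alpha>2$ I would deliberately avoid a reverse Clarkson inequality and instead interpolate off the subquadratic bound, exploiting the exact equality $\|\tilde P_N\|_2=1$. Fixing any $\beta\in(1,2)$ and letting $\theta\in(0,1)$ solve $\tfrac12=\tfrac{1-\theta}{\beta}+\tfrac{\theta}{\alpha}$, log-convexity of the $L^p$-norms yields $1=\|\tilde P_N\|_2\le\|\tilde P_N\|_\beta^{\,1-\theta}\,\|\tilde P_N\|_\alpha^{\,\theta}\le(1-C_\beta)^{1-\theta}\|\tilde P_N\|_\alpha^{\,\theta}$ for all large $N$, whence $\|\tilde P_N\|_\alpha\ge(1-C_\beta)^{-(1-\theta)/\theta}=:1+C_\alpha>1$. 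This is the second inequality, and the final assertion then follows at once from $\|\,\cdot\,\|_\infty\ge\|\,\cdot\,\|_\alpha$, giving $\sup_{|z|=1}|\sum_{k=1}^N\bml(k)z^k|\ge(1+C_\alpha)\sqrt N$.

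The hard part is the subquadratic estimate, that is, controlling the modulus of the \emph{complex} analytic polynomial $\sum\bml(k)z^k$ by its real and imaginary parts, to each of which Littlewood's criterion applies separately; this is precisely where Lemma~\ref{Clar} is indispensable and is the only genuinely nontrivial step. The superquadratic case is, by contrast, essentially automatic through interpolation: a naive use of Clarkson's inequality there yields only the useless baseline $2^{1/\alpha-1/2}<1$, so leaning on $\|\tilde P_N\|_2=1$ together with the $\alpha<2$ bound is the efficient route. A secondary point I would verify carefully is that the constants $A(K,\alpha)$ from Lemmas~\ref{LF}--\ref{LFI} combine, through the admissible range of exponents $(r,s)$ in Lemma~\ref{Clar}, into a strictly positive $C_\alpha$; once this is secured for a single $\beta\in(1,2)$, the remainder of the argument is purely formal.
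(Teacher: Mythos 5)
Your plan has the right architecture, and you were right not to quote Theorem \ref{main1} as a black box: mere non-$L^\alpha$-flatness of $N^{-1/2}\sum_{k\le N}\bml(k)z^k$ says only that $\int\bigl||\tilde P_N|-1\bigr|^\alpha\,dz\not\to0$, which pins $\|\tilde P_N\|_\alpha$ on neither side of $1$; the quantitative conclusion of the corollary requires re-running the proof of Theorem \ref{main1} with constants, which is what you attempt. (The paper itself offers nothing beyond ``as a consequence of our main result,'' so your reconstruction is the honest version of its argument.) Your reduction of $0<\alpha\le1$ to a fixed $\beta\in(1,2)$ by monotonicity of norms on a probability space, your passage from the subquadratic bound to $\alpha>2$ by Lyapunov interpolation through the exact equality $\|\tilde P_N\|_2=1$, and the final use of $\|\cdot\|_\infty\ge\|\cdot\|_\alpha$ are all correct; the interpolation step is in fact a genuine addition, since the paper records no mechanism at all for the $\alpha>2$ half, and, as you observe, pointwise domination $|f_N|\ge|g_N|$ or a reverse Clarkson inequality each lose a factor that destroys the conclusion.

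The gap is precisely the point you dismiss as ``secondary'': for a single $\beta\in(1,2)$, the Clarkson--Littlewood combination does \emph{not} automatically produce $C_\beta>0$. In Lemma \ref{Clar} the exponents must satisfy $1<s\le p\le r$ and $r'\le s$; with $p=\beta<2$ this forces $r\ge s'\ge\beta'>2$, so the best the inequality can give (taking $s=\beta$, $r=\beta'$, $F=\tilde g_N/\sqrt2$, $G=i\tilde h_N/\sqrt2$, so that $F+G=\tilde P_N$ and $F-G=\overline{\tilde P_N}$) is
\begin{equation*}
\|\tilde P_N\|_\beta\;\le\;2^{\frac1\beta-\frac12}\,\bigl(1-A(3,\beta)+o(1)\bigr),
\end{equation*}
where the prefactor $2^{1/\beta-1/2}$ is strictly greater than $1$. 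Hence you need $A(3,\beta)>1-2^{1/2-1/\beta}$: for $\beta$ near $1$ this demands $A(3,\beta)>1-2^{-1/2}\approx0.29$, far beyond anything Lemma \ref{LF} asserts, while as $\beta\to2^-$ both $A(3,\beta)$ and $1-2^{1/2-1/\beta}$ tend to $0$, and the comparison becomes a race between rates about which the paper gives no information. Since your $\alpha\le1$ and $\alpha>2$ cases are both bootstrapped from this single estimate, the entire proof is conditional on an inequality you have not established and cannot establish from the quoted lemmas. For what it is worth, you are in the same position as the paper: its own proof of Theorem \ref{main1} takes $r=2+\delta$ and lets $\delta\to0$, which violates the same admissibility constraint $r\ge\alpha'$; so your proposal faithfully reproduces the paper's method, but thereby inherits, and does not close, exactly this hole.
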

It is showing by the author \cite{Abd-RH}, that if for any large $\alpha>2$, there exists $K_\alpha>0$, such that for a large $N \in \N$, 
$$
\begin{array}{ll}
	\ds \Big\|\sum_{k=1}^{N}\bml(k)z^k \Big\|_\alpha \leq K_\alpha \sqrt{N}.
	\end{array}
$$
Then Riemann Hypothesis is true. 
However, under the Generalized Riemann Hypothesis (GRH)—that is, assuming that for every Dirichlet character $\chi$, the Dirichlet $L$-function $L(s,\chi)$ has no zeros in the region $\Re(s) > \frac{1}{2}$—Baker and Harman \cite{BH} showed that, for any $\varepsilon > 0$,
$$
\begin{array}{ll}
	\ds \Big\|\sum_{k=1}^{N}\bml(k)z^k \Big\|_\infty \ll_\varepsilon {N^{3/4+\varepsilon}}.
	\end{array}
$$
Besides, Hajela and Smith make the following conjecture \cite{HS}.
\begin{conj}[Hajela-Smith Conjecture)]\label{LAC}

$$
\begin{array}{ll}
	\ds \Big\|\sum_{k=1}^{N}\bml(k)z^k \Big\|_\infty \ll_\varepsilon {N^{1/2+\varepsilon}},
	\end{array}
$$ for any $\varepsilon>0$,
\end{conj}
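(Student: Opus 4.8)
Write $S_N(\theta)=\sum_{k=1}^{N}\bml(k)\,e^{2\pi i k\theta}$, so that the statement asserts $\sup_{\theta}|S_N(\theta)|\ll_\varepsilon N^{1/2+\varepsilon}$ for every $\varepsilon>0$. The natural strategy is a Hardy--Littlewood circle-method dissection. By Dirichlet's approximation theorem every $\theta$ admits a representation $\theta=\frac{a}{q}+\beta$ with $(a,q)=1$, $q\le Q$ and $|\beta|\le 1/(qQ)$; taking $Q\asymp N^{1/2}$ partitions the circle into \emph{major arcs} (those $\theta$ whose nearest fraction has small denominator $q$) and \emph{minor arcs} (large $q$). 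The plan is to bound $|S_N(\theta)|$ separately on each family and to optimise the cutoff between $q$-small and $q$-large so that both contributions land at $N^{1/2+\varepsilon}$.

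On the major arcs I would exploit the multiplicative structure of $\bml$. Expanding the additive character $e^{2\pi i k a/q}$ over residue classes modulo $q$ and using additive--multiplicative duality (Gauss sums) rewrites $\sum_{k\le N}\bml(k)e^{2\pi i k a/q}$ as a short linear combination of twisted sums $\sum_{k\le N}\bml(k)\chi(k)$ over Dirichlet characters $\chi \pmod q$. Each twisted sum is governed, through Perron's formula and a contour shift, by the zeros of $L(s,\chi)$; under GRH (the standing hypothesis of the surrounding discussion) the contour may be pushed to $\Re(s)=\tfrac12+\varepsilon$, yielding square-root-type control. The residual factor $e^{2\pi i k\beta}$ is then absorbed by partial summation, since $|\beta|$ is small on a major arc. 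This is essentially the Baker--Harman input and is the tractable part of the argument.

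The decisive step is the \emph{minor arcs}. Here I would apply a combinatorial identity for $\bml$ --- Vaughan's identity, or a Heath--Brown / Dirichlet-hyperbola decomposition --- to express $S_N(\theta)$ as a bounded number of \emph{Type I} sums $\sum_{m}a_m\sum_{n\le N/m}e^{2\pi i mn\theta}$ and \emph{Type II} bilinear sums $\sum_{m}\sum_{n}a_m b_n\,e^{2\pi i mn\theta}$. The Type I sums are summed as geometric progressions and controlled by the lower bound for $\|mn\theta\|$ valid on minor arcs, while the Type II sums would be attacked by Cauchy--Schwarz followed by a large-sieve estimate for the spacing of the fractions $mn\theta$.

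The main obstacle is precisely this Type II estimate. Even feeding in the sharpest available large-sieve and bilinear inputs, the minor-arc bound one obtains is of the shape $N^{3/4+\varepsilon}$ --- exactly the Baker--Harman exponent --- rather than $N^{1/2+\varepsilon}$. Closing the gap from $\tfrac34$ to $\tfrac12$ would require genuine square-root cancellation in the Type II sums \emph{uniformly} over all minor-arc frequencies, a phenomenon that lies beyond current bilinear-form technology and does not follow from GRH alone. Thus, while the major arcs and the circle-method scaffolding are within reach, the full strength of the statement hinges on a new uniform bilinear cancellation input for $\bml$ that is not presently available; this is exactly what keeps the assertion a conjecture rather than a theorem.
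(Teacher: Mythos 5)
First, a point of orientation: the statement you were asked to prove is not a theorem of the paper at all --- it appears there as the Hajela--Smith Conjecture, attributed to \cite{HS}, and the paper offers no proof of it. On the contrary, the paper's own Proposition shows that even the weaker $L^\alpha$-version of this bound (for some large $\alpha>2$, along infinitely many $N$) already implies the Riemann Hypothesis, so any complete proof of the sup-norm statement would in particular prove RH. Your proposal, which ends by conceding that the minor-arc Type II estimate is out of reach and that this ``is exactly what keeps the assertion a conjecture rather than a theorem,'' reaches the correct verdict: it is an account of why the statement is open, not a proof, and there is no proof in the paper to compare it against. Your identification of the genuine obstruction --- uniform square-root cancellation in bilinear Type II sums over all minor-arc frequencies, beyond what Vaughan's identity plus Cauchy--Schwarz plus the large sieve can deliver, with the Baker--Harman exponent $3/4$ as the current ceiling --- is accurate and matches the state of the art.

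Two concrete corrections to the sketch itself, however. Your claim that the major arcs are ``within reach'' is false unconditionally: the single frequency $\theta=0$ (the fraction $a/q=0/1$) already demands $\sum_{k\le N}\bml(k)\ll_\varepsilon N^{1/2+\varepsilon}$, which by Littlewood's theorem (cited in the paper, \cite[p.~371]{T}) is \emph{equivalent} to RH. Your major-arc treatment smuggles in GRH as ``the standing hypothesis of the surrounding discussion,'' but the conjecture as stated carries no such hypothesis --- GRH is only the hypothesis of the Baker--Harman bound $N^{3/4+\varepsilon}$ quoted nearby in the paper; so under your framing even the major arcs are RH-hard, not merely the minor arcs. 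Second, it is worth noting that the exponent $1/2$ in the conjecture cannot be lowered: the paper's Corollary~\ref{RH}, a consequence of Theorem~\ref{main1}, gives $\sup_{|z|=1}\big|\sum_{k=1}^{N}\bml(k)z^k\big|\ge(1+C_\alpha)\sqrt{N}$ for infinitely many $N$, so the conjectured bound, if true, is essentially sharp --- which reinforces, rather than undermines, your conclusion that genuinely new cancellation input would be required.
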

But, by the proof of Theorem 3.1. in \cite{Abd-RH}, we have
\begin{Prop}Assume that for a large $\alpha >2$ there exist $K_\alpha>0$ such that for any $\varepsilon>0$, for infinitely many $N$,

\begin{eqnarray}\label{lambda-L-C}
\ds \Big\|\sum_{k=1}^{N}\bml(k)z^k \Big\|_\alpha \ll_\varepsilon {N^{1/2+\varepsilon}}.
\end{eqnarray}

Then Riemann Hypothesis is true.
\end{Prop}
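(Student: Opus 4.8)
The plan is to convert the hypothesized control on the $L^\alpha$-norm of the Liouville polynomial into an upper bound for the summatory function $L(x) \egdef \sum_{k\le x}\bml(k)$, and then to invoke Littlewood's classical equivalence, recalled in the Introduction, that the Riemann Hypothesis holds if and only if $L(x)=\mathcal{O}_\varepsilon\!\big(x^{1/2+\varepsilon}\big)$ for every $\varepsilon>0$. Write $P_N(z)=\sum_{k=1}^{N}\bml(k)\,z^k$ and, for $1\le M\le N$, let $D_M(z)=\sum_{k=1}^{M}z^k$ be the one-sided Dirichlet kernel. Since $\int_{S^1}z^j\,\overline{z^k}\,dz=\delta_{jk}$, orthogonality gives the exact identity
\[
\int_{S^1}P_N(z)\,\overline{D_M(z)}\,dz=\sum_{k=1}^{M}\bml(k)=L(M),\qquad 1\le M\le N .
\]

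First I would estimate $L(M)$ by Hölder's inequality with the conjugate exponents $\alpha$ and $\alpha'=\alpha/(\alpha-1)$ (note $1<\alpha'<2$ since $\alpha>2$), giving $|L(M)|\le \big\|P_N\big\|_\alpha\,\big\|D_M\big\|_{\alpha'}$. The next step is the standard elementary bound $\|D_M\|_{\alpha'}\asymp M^{\,1-1/\alpha'}=M^{1/\alpha}$, valid for $1<\alpha'<\infty$ and obtained from $|D_M(e^{it})|\asymp\min\!\big(M,1/|t|\big)$. Combining this with the hypothesis $\|P_N\|_\alpha\ll_\varepsilon N^{1/2+\varepsilon}$ and choosing $M$ comparable to $N$ yields
\[
|L(M)|\ll_\varepsilon N^{1/2+\varepsilon}\,M^{1/\alpha}\ll_\varepsilon M^{\frac12+\frac1\alpha+\varepsilon}.
\]
Because $\alpha$ is large, the loss $M^{1/\alpha}$ is absorbed into the arbitrary $\varepsilon$, so that $L(M)=\mathcal{O}_\varepsilon\!\big(M^{1/2+\varepsilon}\big)$ on the relevant range of $M$; feeding this into Littlewood's equivalence then yields the Riemann Hypothesis. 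This is precisely the mechanism of the proof of Theorem~3.1 in \cite{Abd-RH}, with $\sqrt{N}$ replaced by $N^{1/2+\varepsilon}$.

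The main obstacle, and the point requiring genuine care, is the passage from \emph{``for infinitely many $N$''} to a bound on $L(M)$ valid for \emph{all} large $M$, which is what Littlewood's criterion demands: the $\limsup$ governing the abscissa of convergence of $\sum_{k\ge1}\bml(k)k^{-s}=\zeta(2s)/\zeta(s)$ is determined by the full sequence, not by a subsequence, so a subsequential upper bound alone cannot by itself exclude an off-line zero. The observation that rescues the argument is that a \emph{single} admissible $N$ already controls $L(M)$ for \emph{every} $M\le N$ through the displayed identity, not merely $L(N)$. Consequently, if the admissible values of $N$ have bounded ratio gaps, the intervals $[N_j/2,N_j]$ cover all sufficiently large $M$, and one recovers $L(M)=\mathcal{O}_\varepsilon\!\big(M^{1/2+\varepsilon}\big)$ uniformly. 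I would therefore organise the proof around this covering, let $\alpha\to\infty$ to eliminate the $M^{1/\alpha}$ factor, and treat the remaining bookkeeping (the estimate for $\|D_M\|_{\alpha'}$ and the choice $M\asymp N$) as routine.
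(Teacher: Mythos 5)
Your mechanism is the same as the paper's. The paper's own proof is a two-line sketch: it cites the proof of Theorem~3.1 in \cite{Abd-RH} for the bound $\big|\sum_{k\le N}\bml(k)\big|\le C_\alpha N^{\frac{1}{\alpha}+\frac12+\frac{\varepsilon}{2}}$, lets $\alpha$ be large enough that $\frac{1}{\alpha}<\frac{\varepsilon}{2}$, and invokes Littlewood's equivalence. Your H\"older-plus-Dirichlet-kernel computation ($|L(M)|\le\|P_N\|_\alpha\|D_M\|_{\alpha'}$ with $\|D_M\|_{\alpha'}\asymp M^{1/\alpha}$, where $L(M)=\sum_{k\le M}\bml(k)$) is exactly the source of that $N^{1/\alpha}$ loss, so in terms of route you and the paper agree; you have merely unpacked the citation.

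The substantive point is the quantifier issue you flag, and there you are more careful than the paper, but your proposal does not resolve it. Littlewood's criterion requires $L(x)=\mathcal{O}_\varepsilon(x^{1/2+\varepsilon})$ for \emph{all} large $x$: the abscissa of convergence of $\sum_{k\ge1}\bml(k)k^{-s}=\zeta(2s)/\zeta(s)$ is a $\limsup$ over the full sequence, and a bound along a subsequence says nothing about the growth of $L$ in the gaps. The paper's sketch silently applies the criterion to a bound that the hypothesis supplies only for infinitely many $N$, so the gap you identify is genuinely present in the paper's own proof. Your rescue --- that a single admissible $N$ controls $L(M)$ for every $M\le N$ via $\int_{S^1}P_N\overline{D_M}\,dz=L(M)$ --- is the right observation, but as you concede it only works if the admissible $N$'s have bounded ratio gaps, so that intervals of the form $[N_j/2,N_j]$ cover all large $M$; if instead, say, $N_{j+1}\approx N_j^2$, then for $M$ just above $N_j$ the bound $|L(M)|\ll N_{j+1}^{1/2+\varepsilon}M^{1/\alpha}\approx M^{1+2\varepsilon+1/\alpha}$ is trivial. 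Nothing in the hypothesis provides such gap control (and the admissible set may moreover vary with $\varepsilon$, so the covering would need to be arranged for each $\varepsilon$). So your argument, like the paper's, actually proves the Proposition only when ``for infinitely many $N$'' is strengthened to ``for all large $N$'' (which is what \cite{Abd-RH} assumes) or when a gap condition on the admissible $N$'s is added; the difference is that you state this limitation explicitly while the paper does not.
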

\begin{proof}[\textbf{Sketch of the proof}] Suppose that \ref{lambda-L-C} holds. Then, by the proof in \cite{Abd-RH}, there exist a constant $C_\alpha$ such that
$$\Big\|\sum_{k=1}^{N}\bml(k)\Big\| \leq C_\alpha N^{\frac{1}{\alpha}+\frac{1}{2}+\frac{\varepsilon}{2}}.$$
Since $\frac{1}{\alpha} \longrightarrow 0$ as $\alpha \to \infty$, we can choose $\alpha_0$ so large that $\frac{1}{\alpha} < \frac{\varepsilon}{2}$. Form this, we get 
$$\Big\|\sum_{k=1}^{N}\bml(k)\Big\| \leq C_{\alpha_0} N^{\frac{1}{2}+\varepsilon}.$$
We thus conclude that RH holds and, the proof of the proposition is complete.
\end{proof}
As a consequence, we have proved that Hajela-Smith Conjecture implies that RH is true. So, in the same spirit, one can make the following conjecture.
\begin{conj}[$\bml-L^p$-Conjecture ]\label{LAC2}
\begin{eqnarray}
\ds \Big\|\sum_{k=1}^{N}\bml(k)z^k \Big\|_p \ll_\varepsilon {N^{1/2+\varepsilon}}.
\end{eqnarray}
\end{conj}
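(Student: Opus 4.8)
The plan is to fix $p$ and treat separately the elementary range $0<p\le 2$ and the substantive range $p>2$, writing $S_N(z)=\sum_{k=1}^{N}\bml(k)z^k$ and recording first that, since $\bml(k)\in\{\pm1\}$, Parseval gives $\|S_N\|_2^2=\sum_{k=1}^{N}\bml(k)^2=N$. For $0<p\le 2$ the bound is immediate, because on the probability space $(S^1,dz)$ one has $\|S_N\|_p\le\|S_N\|_2=\sqrt N\le N^{1/2+\varepsilon}$; in this range no cancellation beyond orthogonality is required. Thus the whole content lies in $p>2$, where by the monotonicity of $L^p$-norms on $(S^1,dz)$ it suffices to bound the even integer moments $\|S_N\|_{2m}$ for every $m\ge 1$, any finite $p$ being dominated by the next even integer $2m\ge p$. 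The endpoint $p=\infty$ is exactly the Hajela–Smith conjecture and would need a supplementary sup-norm argument rather than a moment bound.

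Second, I would expand the even moment as a Liouville-twisted additive energy. Writing $S_N(z)^m=\sum_h A_m(h)z^h$ with $A_m(h)=\sum_{\substack{k_1+\cdots+k_m=h\\ 1\le k_i\le N}}\bml(k_1)\cdots\bml(k_m)$, Parseval yields $\|S_N\|_{2m}^{2m}=\sum_h |A_m(h)|^2$. The diagonal, where the tuple $(k_i)$ and its conjugate agree as multisets, contributes $\asymp N^m$ because $\bml(k)^2=1$, and this already forces $\|S_N\|_{2m}\gtrsim\sqrt N$, matching the $L^2$ lower bound. Hence the conjecture is equivalent to showing that the twisted energy $\sum_h|A_m(h)|^2$ does not exceed its diagonal value by more than $N^{m\varepsilon}$; equivalently, the signed sums $A_m(h)$ must exhibit essentially square-root cancellation on average, exactly as a random $\pm1$ sequence would.

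Third, to control the off-diagonal I would deploy the Hardy–Littlewood circle method on $S_N$ itself. On the major arcs around rationals $a/q$ with $q$ small, the behaviour of $S_N$ is governed by the distribution of $\bml$ in arithmetic progressions modulo $q$, hence by the Dirichlet $L$-functions $L(s,\chi)$ and their zeros; on the minor arcs one would invoke Davenport-type and Montgomery–Vaughan bounds for exponential sums twisted by a multiplicative function. Combining a power-saving minor-arc estimate with a major-arc main term of the correct order of magnitude would, in principle, deliver $\sum_h|A_m(h)|^2\ll_\varepsilon N^{m+\varepsilon}$ and close the argument.

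The hard part is precisely this last step, and I expect it to be the decisive obstacle. Square-root cancellation in the twisted additive energy is tantamount to essentially optimal pseudorandomness of the Liouville function, which lies beyond current technology except for the smallest moments. Moreover the difficulty is structural rather than technical: the Proposition proved above shows that the conjecture for even a single large exponent $p>2$ already implies the Riemann Hypothesis, so any unconditional proof must in particular resolve RH. Even granting GRH, the sharpest available input (Baker–Harman) yields only the exponent $3/4+\varepsilon$ in the sup-norm, so reaching the conjectured exponent $1/2+\varepsilon$ demands genuinely new cancellation beyond what zero-free regions presently provide — which is why this statement remains a conjecture rather than a theorem.
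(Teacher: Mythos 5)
You should first note what the paper actually does with this statement: it is stated as a \emph{conjecture}, with no proof offered, and the paper's own Proposition (via the argument of \cite{Abd-RH}) shows that the bound for even a single large exponent $p>2$ implies the Riemann Hypothesis. So there is no proof in the paper to match yours against, and any complete ``proof'' would be extraordinary. Your proposal, to its credit, ends by conceding exactly this, so the question is only whether your reductions are sound and where the gap sits. Steps one and two are fine: $\|S_N\|_p\le\|S_N\|_2=\sqrt N$ disposes of $0<p\le 2$, monotonicity of norms on the probability space $(S^1,dz)$ reduces $p>2$ to even moments, and the Parseval identity $\|S_N\|_{2m}^{2m}=\sum_h|A_m(h)|^2$ together with the diagonal count $\asymp_m N^m$ correctly identifies the conjecture with square-root cancellation in the Liouville-twisted additive energy.

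The genuine gap is step three, and it is worse than merely ``hard'': as described, the plan is essentially circular. To estimate $A_m(h)$ by the circle method applied to $S_N$ you need pointwise or mean-value control of $S_N$ on the arcs, but the available unconditional minor-arc inputs for $\bml$ (Davenport-type, Montgomery--Vaughan) save only arbitrary powers of $\log N$ over the trivial bound $N$, never a power of $N$; feeding them into the energy gives $\sum_h|A_m(h)|^2\ll N^{2m}(\log N)^{-A}$, hopelessly far from $N^{m+\varepsilon}$. Even under GRH the Baker--Harman bound $\|S_N\|_\infty\ll_\varepsilon N^{3/4+\varepsilon}$ yields at best exponent $m/2+3m/4$-type hybrids, still short of the diagonal. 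In fact a power-saving bound on the off-diagonal energy for some $m$ is, by your own step two, precisely the statement $\|S_N\|_{2m}\ll_\varepsilon N^{1/2+\varepsilon}$, i.e.\ the conjecture itself; so the circle method here is not a tool but a restatement. Since the paper's Proposition shows the conclusion implies RH, no argument built from current zero-free-region technology can close this step, and the correct disposition of the statement --- which both you and the paper reach --- is that it remains a conjecture.
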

In connection with $\bml-L^p$-Conjecture, let us notice that, on one hand, Odlyzko-te Riel disproved Mertens conjecture \cite{OR} by proving that 
 $$\limsup \Big|\frac{1}{\sqrt{N}}\sum_{n=1}^{N}\bml(n)\Big| >1.06.$$
On the other hand, under RH and the simplicity of the zeros, Fawaz \cite{F} proved that if the set $\Big\{\frac{\zeta(\rho)}{\rho \zeta'(\rho)}, \rho \textrm{~~is~~not-trivial~~zero~~of~~} \zeta \Big\}$ is unbounded then
$$\limsup \Big|\frac{1}{\sqrt{N}}\sum_{n=1}^{N}\bml(n)\Big|=+\infty.$$
Moreover, it is well-known \cite[p. 370-382]{T} that if one of the following wearker hypothesis is true
\begin{enumerate}
\item $limsup \frac{1}{\sqrt{N}}\sum_{n=1}^{N}\bml(n) < A,$ for some constant $A$,
\item $liminf \frac{1}{\sqrt{N}}\sum_{n=1}^{N}\bml(n) >-A$  for some constant $A$,

\end{enumerate}
Then, RH and the simplicity of the zeros holds with other results.

\section{ Baker sequences and the connection to digital communications engineering.}
Barker sequences are well-known in the streams of investigation from digital communications
engineering. Barker introduced such sequences in \cite{Barker}  to produce a low autocorrelation binary sequences, or equivalently  a binary sequence with the highest possible value of $F$. The largest well-known values of $F$ are $F_{12}=14.0833$ and $F_{10}=12.1$  obtain respectively by the following sequences $$1,-1,1,-1,1,1,-1,-1,1,1,1,1,1,$$   and
$$1,-1,1,1,-1,1,1,1,-1,-1,-1.$$
\noindent{}No other merit factor exceeding $10$ is known for any $n$. It was conjectured that $169/12$ and $121/10$ are the
maximum possible values for $F$. This conjecture still open.\\

Given a binary sequence $\mathbf{b}=(b_j)_{j=0}^{n-1}$, that is, for each $j=0,\cdots,n-1$, $b_j=\pm 1$. The $k$-th aperiodic autocorrelation of $\mathbf{b}$ is given by
$$c_k=\sum_{j=0}^{n-k-1}b_j b_{j+k},~~~~{\textrm{~~for~~}} 0 \leq k \leq n-1.$$

For $k < 0$ we put $c_k = c_{-k}$. $\mathbf{b}$ is said to be a Barker sequence if for each $k \in \big\{1,\cdots,n-1\big\}$ we have
$$|c_k| \leq 1, \textrm{~~that is,~~}c_k=0,\pm 1.$$

The Barker sequences and their generalizations have been a subject of many investigations since 1953, both from digital communications engineering view point and complex analysis viewpoint. Therefore, there is an abundant literature on the subject, we refer to \cite{Saffari-Barker}, \cite{Jedwab}, \cite{Borwein2}, 
\cite{BM}. 
and the references therein for more details. For more recent paper on the subject, we refer to \cite{Yu}. Here, we recall only the following result which is needed.

\begin{Th}\label{Barker} Let $(b_i)_{i=1}^{n}$ be a Barker sequence with length $n$.
\begin{enumerate}
\item If $n$ is odd then $n \leq 13$, if not and $n>2$ then $n=4m^2$ for some integer $m$.
\item Assume further that there exist a Barker sequence with arbitrary length and let $P_n$ be a Littlewood polynomial whose coefficients form a Barker sequence of length $n$. Then the sequence $(P_n)$ is square $L^2$-flat.
\end{enumerate}
\end{Th}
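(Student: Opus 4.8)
The plan is to treat the two parts separately, since they are of very different natures. Part (1) is the classical combinatorial and number-theoretic dichotomy for Barker sequences, and I would simply recall it from the literature rather than reprove it: the odd case, forcing $n \le 13$, is the Turyn--Storer theorem, while the even case, forcing $n = 4m^2$, follows from Turyn's congruence argument on the autocorrelations. These facts are independent of the analytic machinery of this paper, which is why the statement merely records them. The real content for us is part (2), and my whole effort would go there.

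For part (2) the approach is a direct Fourier computation on the unit circle. First I would write $P_n(z) = \sum_{j=0}^{n-1} b_j z^j$ with $b_j = \pm 1$, form the $L^2$-normalized polynomial $Q_n = P_n/\sqrt{n}$ (so that $\|Q_n\|_2 = 1$), and expand the squared modulus in terms of the aperiodic autocorrelations $c_m$. For $z = e^{it}$ one has
\[
|P_n(e^{it})|^2 = \sum_{m=-(n-1)}^{n-1} c_m\, e^{imt} = n + 2\sum_{m=1}^{n-1} c_m \cos(mt),
\]
where $c_0 = \sum_j b_j^2 = n$ and $c_{-m} = c_m$. Dividing by $n$ gives the clean identity
\[
|Q_n(e^{it})|^2 - 1 = \frac{2}{n}\sum_{m=1}^{n-1} c_m \cos(mt).
\]

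The key step is then to measure the $L^2$-distance of $|Q_n|^2$ from $1$ using orthogonality of $\{\cos(mt)\}_{m\ge 1}$ with respect to normalized Lebesgue measure, and to invoke the defining Barker property $|c_m|\le 1$ for $1 \le m \le n-1$, which bounds $\sum_{m=1}^{n-1} c_m^2 \le n-1$:
\[
\int_{S^1} \big|\, |Q_n(z)|^2 - 1 \,\big|^2 \, dz = \frac{2}{n^2}\sum_{m=1}^{n-1} c_m^2 \le \frac{2(n-1)}{n^2} \xrightarrow[n\to\infty]{} 0 .
\]
Under the standing hypothesis that Barker sequences of arbitrarily large length exist, this is exactly the assertion that $(Q_n)$ is square $L^2$-flat, which completes part (2).

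I do not expect a genuine obstacle in part (2): once the autocorrelation expansion is in place it is a one-line orthogonality estimate, with the merit-factor lower bound $F_n \ge n^2/\big(2(n-1)\big)$ lurking behind the scenes. The only conceptual point worth flagging is that the clause \emph{``assume there exist Barker sequences of arbitrary length''} is precisely what makes flatness as an $n \to \infty$ limit meaningful; without it the family $(P_n)$ is finite and the statement is vacuous. It is this square $L^2$-flatness --- which forces ordinary $L^2$-flatness via the elementary pointwise bound $\big|\,|Q_n| - 1\,\big| \le \big|\,|Q_n|^2 - 1\,\big|$ --- that collides with Theorem~\ref{main1}, and that collision is what ultimately forbids Barker sequences of large length.
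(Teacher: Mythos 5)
Your proposal is correct, and in fact it does more than the paper does. The paper's ``proof'' of this theorem is pure citation: part (1) is attributed to Turyn--Storer \cite{TS}, and part (2) is attributed to Saffari \cite{Saffari-Barker}, with a pointer to the proof of Theorem 4.1 in \cite{Borwein2}; no computation is carried out in the text. Your treatment of part (1) therefore coincides exactly with the paper's (recall it from the literature), while for part (2) you supply the self-contained argument that the paper outsources: the expansion $|P_n(e^{it})|^2 = n + 2\sum_{m=1}^{n-1} c_m\cos(mt)$, orthogonality, and the Barker bound $|c_m|\le 1$ giving
\[
\int_{S^1}\bigl|\,|Q_n(z)|^2-1\,\bigr|^2\,dz \;=\; \frac{2}{n^2}\sum_{m=1}^{n-1}c_m^2 \;\le\; \frac{2(n-1)}{n^2}\;\longrightarrow\;0,
\]
which is precisely the merit-factor computation underlying Saffari's result (your constants check out, since the cross term contributes $c_m^2\cdot\tfrac12$ under the normalized measure). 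Your closing observations are also sound: the pointwise bound $\bigl|\,|Q_n|-1\,\bigr|\le\bigl|\,|Q_n|^2-1\,\bigr|$ (valid because $|Q_n|+1\ge 1$) converts square $L^2$-flatness into $L^2$-flatness, and it is this flatness that contradicts Theorem~\ref{main1} and yields the paper's corollary that only finitely many Barker sequences exist. What your approach buys is transparency and self-containment; what the paper's buys is brevity and a precise attribution of credit. Either is acceptable, though if you present yours you should still cite Turyn--Storer and Saffari, since part (1) remains a deep result you are quoting, not proving.
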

\begin{proof}(1) is due to Turyn and  Storer \cite{TS}. The second part (2) is essentially due to Saffari \cite{Saffari-Barker}, we refer also to the proof of Theorem 4.1 in \cite{Borwein2} line 5.
\end{proof}
\vskip 0.1cm
At this point, by our main result (Theorem \ref{main1}) and Theorem \ref{Barker}, it follows

\begin{Cor}There are only finitely many Barker sequences.
\end{Cor}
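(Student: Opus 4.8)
The plan is to argue by contradiction, combining the two structural inputs already available: the classification-plus-flatness dichotomy for Barker sequences (Theorem \ref{Barker}) and the non-flatness of Littlewood polynomials (Theorem \ref{main1} and its corollary). I would suppose, to the contrary, that there are infinitely many Barker sequences. By part (1) of Theorem \ref{Barker}, every Barker sequence of odd length $n$ satisfies $n \le 13$; hence only finitely many Barker sequences can have odd length. Consequently, if infinitely many Barker sequences exist, there must be Barker sequences of \emph{arbitrarily large} (necessarily even) length $n = 4m^2$. In particular, the hypothesis of part (2) of Theorem \ref{Barker}---the existence of Barker sequences of arbitrary length---is satisfied.

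First I would record the associated normalized Littlewood polynomials. For each such Barker sequence $\mathbf{b}=(b_j)_{j=0}^{n-1}$ one forms
\[
P_n(z) = \frac{1}{\sqrt{n}}\sum_{j=0}^{n-1} b_j z^j, \qquad |z|=1,
\]
so that $\|P_n\|_2 = 1$. By part (2) of Theorem \ref{Barker}, the sequence $(P_n)$ is square $L^2$-flat, i.e.
\[
\big\| \, |P_n|^2 - 1 \, \big\|_2 \xrightarrow[n\to\infty]{} 0.
\]
The next step is to convert this into $L^\alpha$-flatness in the precise sense used in the main theorem. Here the key elementary observation is the pointwise bound
\[
\big| \,|P_n(z)| - 1 \,\big| \;=\; \frac{\big|\,|P_n(z)|^2 - 1\,\big|}{|P_n(z)| + 1} \;\le\; \big|\,|P_n(z)|^2 - 1\,\big|,
\]
valid since $|P_n(z)| + 1 \ge 1$. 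Integrating the square over $S^1$ then yields
\[
\int_{S^1} \big|\,|P_n(z)| - 1\,\big|^2 \, dz \;\le\; \int_{S^1} \big|\,|P_n(z)|^2 - 1\,\big|^2 \, dz \xrightarrow[n\to\infty]{} 0,
\]
so $(P_n)$ is $L^2$-flat. Since $dz$ is a probability measure on $S^1$, Jensen's (equivalently Hölder's) inequality gives $\||P_n|-1\|_\alpha \le \||P_n|-1\|_2$ for every $0 < \alpha \le 2$, whence $(P_n)$ is $L^\alpha$-flat for all $0 < \alpha \le 2$.

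This directly contradicts the corollary of Theorem \ref{main1} resolving the Erd\H{o}s--Littlewood conjecture, namely that a sequence of Littlewood polynomials is not $L^\alpha$-flat for any $\alpha > 0$. The contradiction shows the standing assumption to be untenable, so there are only finitely many Barker sequences. The only genuinely delicate point in this argument is the reconciliation of the two distinct notions of flatness---the ``square $L^2$'' flatness that the Barker autocorrelation condition forces upon $(P_n)$ versus the ``$L^\alpha$'' flatness that Theorem \ref{main1} rules out---which is exactly what the elementary pointwise inequality above bridges; once that bridge is in place, the result is a clean juxtaposition of the two theorems, together with the observation from part (1) that the odd-length case cannot supply infinitely many examples.
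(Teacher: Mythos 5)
Your proof is correct and takes essentially the same route as the paper, whose own proof consists of nothing more than citing the combination of Theorem~\ref{main1} (through its Erd\H{o}s--Littlewood corollary) with Theorem~\ref{Barker}. The only difference is that you spell out the bridging step the paper leaves implicit---square $L^2$-flatness gives $L^2$-flatness via the pointwise bound $\bigl|\,|P_n|-1\,\bigr| \le \bigl|\,|P_n|^2-1\,\bigr|$, and hence $L^\alpha$-flatness for $0<\alpha\le 2$, contradicting the non-flatness of Littlewood polynomials.
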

\section{A Sequence of Gauss-Frenesl Polynomials That Are Mahler-Flat}
In this section, we establish the existence of polynomials from a certain class—referred to as the \emph{Gauss-Frenesl polynomials}—that are Mahler-flat. To this end, we make use of some tools from \cite{Abd-nad3}, which are based on the so-called generalized Riesz products. 
We start by stating our second main result.
\begin{Th}\label{TH2}There exist a sequence of Newman--Gauss polynomials $(P_n)$ for which we have
$$M(P_n)= \exp \Big(\int \log\big(Q_n(z)\big) dz\Big)\tend{n}{+\infty}1.$$
\end{Th}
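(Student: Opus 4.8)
The plan is to exhibit an explicit sequence of unimodular (Gauss--Fresnel) polynomials whose Mahler measure tends to $1$, and then to verify Mahler-flatness by passing through the theory of generalized Riesz products developed in~\cite{Abd-nad3}. First I would fix the Gauss--Fresnel normalization: take
\[
P_n(z)=\frac{1}{\sqrt{n}}\sum_{k=0}^{n-1} e^{i\pi k^2/n}\, z^k, \qquad |z|=1,
\]
so that each $|c_k|=1$ and $\|P_n\|_2=1$ by the computation of $\|P_n\|_2^2$ already recorded in the introduction. The classical point, going back to Newman and Fresnel-type estimates for quadratic Gauss sums, is that these polynomials are close to flat: the partial Gauss sums $\sum_{k=0}^{m-1} e^{i\pi k^2/n}$ are uniformly $O(\sqrt{n})$ by the standard stationary-phase / van der Corput estimate, and this uniform control is exactly what forces $|P_n(z)|$ to stay bounded and bounded away from $0$ on most of $S^1$.

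Next I would recall the precise formulation of Mahler-flatness in the statement, namely that
\[
M(P_n)=\exp\Big(\int_{S^1}\log|P_n(z)|\,dz\Big)\xrightarrow[n\to\infty]{}1,
\]
and reduce it to an almost-everywhere flatness statement together with uniform integrability of $\log|P_n|$. Concretely, I would show that $|P_n(z)|\to 1$ for Lebesgue-almost every $z$ (flatness in measure, in the sense of el Abdalaoui--Nadkarni recalled in the introduction), and that the family $\{\log|P_n|\}$ is uniformly integrable, so that convergence in measure upgrades to convergence of the logarithmic integrals by a Vitali-type argument. Since $\log t \le t-1$ and $\int_{S^1}|P_n|^2\,dz=1$, the positive part of $\log|P_n|$ is automatically controlled; the delicate side is the contribution of the zeros of $P_n$, where $\log|P_n|\to-\infty$.

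The main obstacle, as usual for Mahler measure, is controlling the \emph{negative} part of $\log|P_n|$ near the zeros of $P_n$: almost-everywhere flatness alone does not bound $\int \log|P_n|$ from below, because a small set where $|P_n|$ is tiny can contribute a large negative mass. To handle this I would invoke the absolute-continuity / generalized Riesz product machinery of~\cite{Abd-nad3}: the point is that the spectral measures $|P_n|^2\,dz$ associated with the Gauss--Fresnel construction converge weak-$*$ to Lebesgue measure, and the criterion for absolute continuity in~\cite{Abd-nad3} yields the requisite uniform integrability of $\log|P_n|$ (equivalently, a uniform $L^1$ bound on $(\log|P_n|)^{-}$), which rules out logarithmic singularities accumulating too fast. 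Combining this uniform integrability with the almost-everywhere convergence $|P_n|\to 1$ then gives $\int_{S^1}\log|P_n|\,dz\to 0$ via dominated/Vitali convergence, whence $M(P_n)\to 1$, providing a new proof of the Beller--Newman theorem.
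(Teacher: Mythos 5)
You correctly identify the crux of the problem --- controlling the negative part of $\log|P_n|$ near the zeros --- but your resolution of it is a genuine gap. You assert that the absolute-continuity criterion of \cite{Abd-nad3} ``yields the requisite uniform integrability of $\log|P_n|$''. No such statement is available in that machinery, and nothing in your proposal derives it: weak-$*$ convergence of $|P_n|^2\,dz$ to Lebesgue measure (which does hold here) gives no lower control whatsoever on $\int\log|P_n|\,dz$, since an arbitrarily small set where $|P_n|$ is tiny is invisible to weak-$*$ limits. Moreover, your parenthetical claim that uniform integrability is ``equivalently a uniform $L^1$ bound on $(\log|P_n|)^{-}$'' is false: on a probability space a uniform $L^1$ bound does not imply uniform integrability (take $f_n=n\chi_{[0,1/n]}$), so even your stated reduction is not an equivalence. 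Finally, your route to almost-everywhere flatness is also unsubstantiated: uniform $O(\sqrt n)$ bounds on partial Gauss sums give, by summation by parts, upper bounds on $|P_n(z)|$, but they do not keep $|P_n|$ bounded away from $0$, and certainly do not give $|P_n|\to 1$ a.e.

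For contrast, the paper's proof avoids uniform integrability entirely. First, it obtains $L^1$-flatness from Newman's $L^4$-estimate together with the H\"older inequality $\int|f|^2\le\big(\int|f|^4\big)^{1/3}\big(\int|f|\big)^{2/3}$, i.e.\ Lemma \ref{lemN2}; hence $\big\||P_n|-1\big\|_2^2=2-2\int|P_n|\,dz\to 0$, and one extracts a subsequence $(P_{m_n})$ with $|P_{m_n}|\to 1$ a.e.\ (a subsequence suffices, since the theorem only asserts the existence of one such sequence). It then invokes Lemma \ref{th7}: one may pass to a further subsequence and dilations $z\mapsto z^{l_k}$ so that the polynomials $P_{j_k}(z^{l_k})$ are dissociated and $\prod_k|P_{j_k}(z^{l_k})|^2$ is finite and nonzero a.e., defining a generalized Riesz product $\mu$. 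Lemma \ref{lem7} then gives the factorization $M(\mu)=\prod_k M(P_{j_k})^2$. Since each factor satisfies $M(P_{j_k})\le\|P_{j_k}\|_2=1$ by Jensen's inequality, convergence of this infinite product to the nonzero value $M(\mu)$ forces $M(P_{j_k})\to 1$. It is this multiplicativity of the Mahler measure over dissociated factors --- not any equi-integrability of $\log|P_n|$ --- that neutralizes the logarithmic singularities. To salvage your direct Vitali-type argument you would have to actually prove equi-integrability of $(\log|P_n|)^{-}$ for the Gauss--Fresnel polynomials, which is not in the literature you cite and is essentially as hard as the theorem itself.
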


This gives an alternative proof of E. Beller and D. J. Newman \cite{BN}.

First, we recall the definition of the class of Gauss-Frenesl polynomials, given as follows. Let $n$ be a positive integer,
$$P_n(z)=\frac{1}{\sqrt{n}}\sum_{j=0}^{n-1}g(n,j) z^j,$$
where
$$g(n,j)=e^{\frac{\pi i j^2}{n}}, j=0,\cdots,n-1.$$ 
In fact,  D.~J.~Newman proved 
\begin{lem}[$L^4$-lemma of DJ Newman]$\ds \|P_n\|_4^4=n^2+O(n^{3/2}).$
\end{lem}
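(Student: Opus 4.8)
The plan is to reduce the computation of $\|P_n\|_4^4$ to a sum of squared autocorrelation coefficients and then estimate that sum by splitting the range of lags at the threshold $m \sim \sqrt{n}$. Since the stated identity $\|P_n\|_4^4 = n^2 + O(n^{3/2})$ is homogeneous of degree $4$ in the coefficients, I work with the unnormalized Gauss--Fresnel sum $S_n(z) = \sum_{j=0}^{n-1} e^{\pi i j^2/n} z^j$ (so that $P_n = n^{-1/2} S_n$ and $\|P_n\|_4^4 = n^{-2}\|S_n\|_4^4$); proving $\|S_n\|_4^4 = n^2 + O(n^{3/2})$ is then equivalent to the claim.

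First I would expand $|S_n(z)|^2 = \sum_{m=-(n-1)}^{n-1} c_m z^m$, where $c_m = \sum_{k} e^{\pi i (k+m)^2/n} e^{-\pi i k^2/n}$ is the aperiodic autocorrelation of the coefficient sequence at lag $m$, the sum over $k$ running over indices with $0 \le k$ and $k+m \le n-1$. Applying Parseval to $|S_n|^2 \in L^2(S^1)$ gives
$$
\|S_n\|_4^4 = \int_{S^1} \big| |S_n(z)|^2 \big|^2\, dz = \sum_{m=-(n-1)}^{n-1} |c_m|^2 = n^2 + 2\sum_{m=1}^{n-1} |c_m|^2,
$$
using $c_0 = n$ and $c_{-m} = \overline{c_m}$. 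Thus everything reduces to showing $\sum_{m=1}^{n-1} |c_m|^2 = O(n^{3/2})$.

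The key computation is that for $1 \le m \le n-1$ the phase factors telescope, $e^{\pi i ((k+m)^2 - k^2)/n} = e^{\pi i m^2/n}\, e^{2\pi i k m /n}$, so that $c_m$ is, up to the unimodular factor $e^{\pi i m^2/n}$, a finite geometric sum in $e^{2\pi i m/n}$. Summing the series and using $e^{2\pi i m}=1$ yields the closed form
$$
|c_m| = \frac{|\sin(\pi m^2/n)|}{|\sin(\pi m/n)|}.
$$
A short check shows $|c_{n-m}| = |c_m|$, so it suffices to estimate the sum for $1 \le m \le n/2$, where Jordan's inequality gives $\sin(\pi m/n) \ge 2m/n$ and hence $|c_m|^2 \le \tfrac{n^2}{4m^2}\sin^2(\pi m^2/n)$. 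Then I split at $m \sim \sqrt{n}$: for $1 \le m \le \sqrt{n}$ I use $|\sin(\pi m^2/n)| \le \pi m^2/n$ to obtain $|c_m|^2 \le \tfrac{\pi^2}{4}\, m^2$, whose sum is $O((\sqrt{n})^3) = O(n^{3/2})$; for $\sqrt{n} < m \le n/2$ I use the trivial bound $\sin^2(\pi m^2/n) \le 1$ to obtain $|c_m|^2 \le \tfrac{n^2}{4m^2}$, whose sum is bounded by $\tfrac{n^2}{4}\sum_{m > \sqrt{n}} m^{-2} = O(n^{3/2})$. Combining the two regimes and the symmetry $m \leftrightarrow n-m$ gives $\sum_{m=1}^{n-1}|c_m|^2 = O(n^{3/2})$.

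The only genuinely delicate point is the behaviour of the lags $m$ near $0$ (and, by symmetry, near $n$), where the denominator $\sin(\pi m/n)$ is small: the naive bound $|c_m| \le 1/\sin(\pi m/n)$ alone would only yield $O(n^2)$, which is too weak. The resolution, and the main content of the argument, is that in precisely this regime the numerator $\sin(\pi m^2/n)$ is comparably small, so that $|c_m|$ is of size $\approx m$, and it is the crossover at $m \sim \sqrt{n}$ between the two competing bounds that produces the sharp exponent $3/2$. I would stress that no deep cancellation in Gauss sums is required here; the elementary two-regime split suffices, provided one keeps careful track of the constants near the endpoints.
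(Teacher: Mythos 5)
Your proof is correct and is essentially Newman's original argument --- the autocorrelation expansion via Parseval, the closed form $|c_m| = |\sin(\pi m^2/n)|/|\sin(\pi m/n)|$ obtained from the geometric sum, and the two-regime split at $m \sim \sqrt{n}$ --- which the paper does not reproduce but simply cites from \cite{New}. You were also right to read the lemma as a statement about the unnormalized sum $S_n = \sqrt{n}\,P_n$: with the paper's normalization $\|P_n\|_2 = 1$, the displayed identity must be interpreted as $\|S_n\|_4^4 = n^2 + O(n^{3/2})$, equivalently $\|P_n\|_4^4 = 1 + O(n^{-1/2})$, which is exactly the form the paper uses immediately afterwards.
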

This gives 
$$ \|P_n\|_4^4 \tend{n}{+\infty}1,$$
and by appealing to the following form of H\"{o}lder's inequality:
\[
\int |f|^2 \leq \left( \int |f|^4 \right)^{\frac{1}{3}} \left( \int |f| \right)^{\frac{2}{3}},
\]
we obtain the following lemma.

\begin{lem}[$L^1$-lemma of D.~J. Newman]\label{lemN2} $\ds \|P_n\|_1 \tend{n}{+\infty}1.$
\end{lem}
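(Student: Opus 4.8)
The plan is to sandwich $\|P_n\|_1$ between an upper and a lower bound that both tend to $1$, using only the two facts already in hand: the exact normalization $\|P_n\|_2 = 1$ and the asymptotic $\|P_n\|_4^4 \to 1$. The former is immediate, since $|g(n,j)| = 1$ for every $j$ forces $\|P_n\|_2^2 = \frac{1}{n}\sum_{j=0}^{n-1}|g(n,j)|^2 = 1$; the latter is precisely the content of Newman's $L^4$-lemma once one passes to the $L^2$-normalized polynomial. With these two endpoints fixed, $L^1$ sits ``between'' them and is squeezed to $1$.

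For the upper bound I would note that the normalized Lebesgue measure $dz$ on $S^1$ is a probability measure, so $L^p$-norms are monotone in $p$ and in particular $\|P_n\|_1 \le \|P_n\|_2 = 1$ for every $n$. This already gives $\limsup_n \|P_n\|_1 \le 1$ with no further work. For the lower bound I would invoke the stated interpolation form of H\"older's inequality
\[
\int |P_n|^2 \;\le\; \Big(\int |P_n|^4\Big)^{1/3}\Big(\int |P_n|\Big)^{2/3},
\]
which rewritten in norms reads $\|P_n\|_2^2 \le (\|P_n\|_4^4)^{1/3}(\|P_n\|_1)^{2/3}$. Substituting $\|P_n\|_2^2 = 1$ and solving for the $L^1$-factor yields
\[
\|P_n\|_1 \;\ge\; (\|P_n\|_4^4)^{-1/2},
\]
and since $\|P_n\|_4^4 \to 1$ the right-hand side converges to $1$, so $\liminf_n \|P_n\|_1 \ge 1$. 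Combining the two one-sided estimates gives $\|P_n\|_1 \to 1$, which is exactly the assertion.

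I do not expect any genuine obstacle at this level, because all of the analytic difficulty is encapsulated in the $L^4$-lemma itself, namely the delicate evaluation of the fourth moment of the Gauss--Fresnel sum $\sum_j e^{\pi i j^2 / n} z^j$, which I am permitted to assume. The single point demanding care is the bookkeeping of the normalization: the $L^4$-lemma is stated for the unnormalized Gauss sum as $n^2 + O(n^{3/2})$, so before feeding it into the interpolation inequality I would divide by $\|\cdot\|_2^4 = n^2$ to obtain $\|P_n\|_4^4 = 1 + O(n^{-1/2}) \to 1$ for the normalized $P_n$. Once that conversion is made explicit, the argument is a routine two-sided squeeze and the lemma follows.
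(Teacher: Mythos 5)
Your proof is correct and follows essentially the same route as the paper: the squeeze between the upper bound $\|P_n\|_1 \le \|P_n\|_2 = 1$ and the lower bound $\|P_n\|_1 \ge (\|P_n\|_4^4)^{-1/2}$ obtained from the interpolation inequality $\int |f|^2 \le \bigl(\int |f|^4\bigr)^{1/3}\bigl(\int |f|\bigr)^{2/3}$ together with Newman's $L^4$-lemma. Your explicit bookkeeping of the normalization, converting $n^2 + O(n^{3/2})$ for the unnormalized sum into $\|P_n\|_4^4 = 1 + O(n^{-1/2})$ for the $L^2$-normalized polynomial, is exactly the step the paper performs implicitly when it passes from the $L^4$-lemma to the claim $\|P_n\|_4^4 \to 1$.
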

 
We now proceed to recall the background from \cite{Abd-nad3}.

\paragraph{$\S$.\textbf{Generalized Riesz products.}} \hspace{0pt}

We start by recalling the following definition of the generalized Riesz products.

\begin{Def}\label{def1}
Let $P_1, P_2, \cdots,$ be a sequence of trigonometric polynomials such that
\begin{enumerate}[(i)]
\item for any finite sequence $i_1< i_2 < \cdots < i_k$ of natural numbers
$$\bigintss_{S^1}\Bigl| (P_{i_1}P_{i_2}\cdots P_{i_k})(z)\Bigr|^2dz = 1,$$
where $S^1$ denotes the circle group and $dz$ the normalized Lebesgue measure on $S^1$,
\item for any infinite sequence $i_1 < i_2 < \cdots $ of natural numbers the weak limit of the measures
$\mid (P_{i_1}P_{i_2}\cdots P_{i_k})(z)\mid^2dz, k=1,2,\cdots $ as $k\rightarrow \infty$ exists,
\end{enumerate}
then the measure $\mu$ given by the weak limit of $\mid (P_1P_2\cdots P_k)(z)\mid^2dz $ as $k\rightarrow \infty$
is called generalized Riesz product of the polynomials $\mid P_1\mid^2,
 \mid P_2\mid^2,\cdots$ and denoted by
  $$\displaystyle  \mu =\prod_{j=1}^\infty \bigl| P_j\bigr|^2  \eqno (1.1).$$
\end{Def}

For an increasing sequence $k_1 < k_2 < \cdots $ of natural numbers the product \linebreak $\prod_{j=1}^\infty |P_{k_j}|^2$
makes sense as the weak limit of probability measures \linebreak $| (P_{k_1}P_{k_2}\cdots P_{k_{n}})(z)|^2dz$ as $n\rightarrow \infty$. It depends on the sequence $k_1 < k_2\cdots$, and  called a subproduct of the given generalized Riesz product.\\

\paragraph{$\S$.\textbf{Dissociated Polynomials.}} \hspace{0pt}

We say that a set of trigonometric polynomials is dissociated if in the formal expansion of product of any finitely many of them, the powers of $z$ in the non-zero terms are all distinct \cite{Abd-Nad}. For example the polynomials $(1+z)$ and $(1+z^2)$ are dissociated. The following is proved in \cite{Abd-Nad}.
\begin{lem}
If $P(z) = \ds \sum_{j=-m}^m a_jz^j, Q(z) = \ds \sum_{j=-n}^{n}b_jz^j$, $m \leq n$, are two trigonometric  polynomials then
for some $N$, $P(z)$ and $Q(z^N)$ are dissociated.
\end{lem}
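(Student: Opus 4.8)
The plan is to reduce the dissociation of the pair $\{P(z), Q(z^N)\}$ to a single injectivity statement about exponents, and then to choose $N$ large enough—specifically $N \geq 2m+1$—so that no two monomials in the relevant product can share a power of $z$. According to the definition of dissociation recalled above, I must check that in the formal expansion of the product of any finitely many members of the set $\{P(z), Q(z^N)\}$, the powers of $z$ in the nonzero terms are all distinct. Since the set has only two members, the only products to consider are $P(z)$ alone, $Q(z^N)$ alone, and the single mixed product $P(z)\,Q(z^N)$; the first two already have pairwise distinct powers because each is a Laurent polynomial in $z$ with distinct monomials. Thus the whole claim rests on the mixed product.

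First I would expand
\[
P(z)\,Q(z^N) \;=\; \sum_{j=-m}^{m}\sum_{k=-n}^{n} a_j b_k\, z^{\,j+kN},
\]
so that the exponents occurring are exactly the integers $j+kN$ with $(j,k)$ ranging over the rectangle $\{-m,\dots,m\}\times\{-n,\dots,n\}$. The task then reduces to showing that the map $(j,k)\mapsto j+kN$ is injective on this rectangle: if it is, then after formal expansion distinct index pairs produce distinct powers of $z$, so in particular the nonzero terms carry all distinct exponents, which is precisely the dissociation condition.

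To establish injectivity I would argue as follows. Suppose $j_1+k_1N = j_2+k_2N$ with both pairs in the rectangle; rearranging gives $j_1-j_2 = (k_2-k_1)N$. The left-hand side satisfies $|j_1-j_2|\le 2m$, while the right-hand side is an integer multiple of $N$. Choosing $N\ge 2m+1$, any \emph{nonzero} multiple of $N$ has absolute value at least $N>2m$, which is incompatible with $|j_1-j_2|\le 2m$; hence $k_1=k_2$, and then $j_1=j_2$. This is nothing more than uniqueness of the quotient and remainder in division by $N$ (equivalently, the uniqueness of base-$N$ digits), and it delivers the required injectivity.

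I do not anticipate a genuine obstacle: the argument is the standard device of spacing out the frequencies of $Q$ beyond the total spread of $P$, and the only point requiring care is the bookkeeping of the definition—namely recognizing that for a two-element set the mixed product $P(z)\,Q(z^N)$ is the sole case that must be verified. One then concludes cleanly by taking $N=2m+1$ (or any larger positive integer), which makes $P(z)$ and $Q(z^N)$ dissociated and completes the proof.
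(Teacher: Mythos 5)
Your proof is correct, and it is the standard frequency-spacing argument: the paper itself does not prove this lemma but defers it to \cite{Abd-Nad}, where the proof proceeds in essentially the same way, choosing $N$ larger than the spread of the exponents of $P$ so that the map $(j,k)\mapsto j+kN$ is injective (uniqueness of division by $N$). Your handling of the definition—reducing a two-element set to the single mixed product $P(z)\,Q(z^N)$, the singletons being trivial—is exactly the intended reading, so nothing is missing.
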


 Let $P_1, P_2,\cdots$ be a sequence of polynomials, each $P_i$ being  of $L^2(S^1, dz)$ norm 1. Then the constant term of each $\mid P_i(z)\mid^2$ is 1. If we choose  $1 = N_1 < N_2 < N_3 \cdots$  so that $\mid P_1(z^{N_1})\mid^2, \mid P_2(z^{N_2})\mid^2, \mid P_3(z^{N_3})\mid^2, \cdots$ are dissociated, then the constant term of each finite product
$$\prod_{j=1}^n\mid P_j(z^{N_j})\mid^2$$ is one so that each finite product  integrates to 1 with respect to $dz$. Also, since $\mid P_j(z^{N_j}) \mid^2, j =1,2, \cdots$ are dissociated,
for any given $k$, the $k$-th Fourier coefficient of $\prod_{j=1}^n\mid P_j(z^{N_j})\mid^2$ is either zero for all $n$, or, if it is non-zero for some $n = n_0$ (say), then its remains the same  for all $n \geq n_0$. Thus the measures $(\prod_{j=1}^n| P_j(z^{N_j})|^2)dz, n=1,2,\cdots$ admit a weak limit
on $S^1$. It is called the generalized Riesz product of the polynomials $\mid P_j(z^{N_j})\mid^2, j=1,2,\cdots$. Let $\mu$ denote this measure. It is known \cite{Abd-Nad} that
$ \prod_{j=1}^k |P_j(z^{N_j})|, k=1,2,\cdots$, converge in $L^1(S^1,dz)$ to
$\sqrt{\frac{d\mu}{dz}}$ as $k\rightarrow \infty$. It follows from this that if $\prod_{j=1}^k\mid P_j(z^{N_j})\mid , k=1,2,\cdots$ converge a.e. $(dz)$ to a finite positive value then $\mu$ has a part which is equivalent to Lebesgue measure.\\

The following theorem is proved in \cite{Abd-Nad}.

\begin{lem}[Theorem in \cite{Abd-nad3}]\label{th7}   Let $P_j, j =1,2,\cdots$ be a sequence of non-constant polynomials
of $L^2(S^1,dz)$ norm 1 such that $\lim_{j\rightarrow \infty}\mid P_j(z)\mid =1 $ a.e. $(dz)$ then there exists a subsequence $P_{j_k}, k=1,2,\cdots$ and natural numbers $l_1 < l_2 < \cdots$ such that the polynomials $P_{j_k}(z^{l_k}), k=1,2,\cdots $ are dissociated and
the infinite product $\prod_{k=1}^\infty |P_{j_k}(z^{l_k})|^2$ has finite nonzero value a.e $(dz)$.
\end{lem}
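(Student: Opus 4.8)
The plan is to realize $\prod_{k}|P_{j_k}(z^{l_k})|^2$ as the almost-everywhere limit of a nonnegative martingale and then to decide its positivity by a Kakutani-type dichotomy, with dissociation playing the role that independence plays for products of random variables. First, I would upgrade the pointwise flatness to a norm statement. Since $\|P_j\|_2=1$, the family $(|P_j|)$ is bounded in $L^2(S^1,dz)$, hence uniformly integrable; combined with $|P_j|\to 1$ a.e., Vitali's theorem gives $|P_j|\to 1$ in $L^1$, so that $\|P_j\|_1\to 1$ (equivalently $\||P_j|-1\|_2^2=2-2\|P_j\|_1\to 0$). I may therefore pass to a subsequence $(P_{j_k})$ with
$$\sum_{k\ge 1}\big(1-\|P_{j_k}\|_1\big)<\infty,$$
using $1-\|P_{j_k}\|_1\le \||P_{j_k}|-1\|_1$. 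This summability is the quantitative form of flatness that will drive the whole argument.

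Next, invoking the dissociation lemma quoted above, I choose $l_1<l_2<\cdots$ inductively so that the polynomials $P_{j_k}(z^{l_k})$ are dissociated. Put $Q_k(z)=|P_{j_k}(z^{l_k})|^2$; each $Q_k$ is a nonnegative trigonometric polynomial whose constant Fourier coefficient is $\|P_{j_k}\|_2^2=1$ (because $z\mapsto z^{l_k}$ preserves $dz$). Dissociation forces the constant term of every finite product $R_n:=\prod_{k=1}^n Q_k$ to equal $1$, i.e.\ $\int_{S^1}R_n\,dz=1$, and makes $(R_n)$ a nonnegative martingale (mean $1$) for the filtration generated by the successive frequency blocks. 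By the martingale convergence theorem, $R_n\to R_\infty$ a.e.\ with $0\le R_\infty<\infty$; this settles the ``finite a.e.'' half, and the entire difficulty is now to prove $R_\infty>0$ a.e.

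Finally, for the positivity I would renormalize by $\Pi_n:=\prod_{k=1}^n|P_{j_k}(z^{l_k})|=\sqrt{R_n}$ and $c_n:=\prod_{k=1}^n\|P_{j_k}\|_1$, and study $\tilde\Pi_n:=\Pi_n/c_n$. In the idealized independent picture $(\tilde\Pi_n)$ is a nonnegative martingale with $\E[\tilde\Pi_n]=1$ and $\E[\tilde\Pi_n^2]=c_n^{-2}$, which stays bounded exactly because $\sum_k(1-\|P_{j_k}\|_1)<\infty$ forces $c_\infty=\prod_k\|P_{j_k}\|_1>0$. An $L^2$-bounded nonnegative martingale is uniformly integrable, so $\tilde\Pi_n\to\tilde\Pi_\infty$ a.e.\ and in $L^1$ with $\E[\tilde\Pi_\infty]=1$; since $\{\tilde\Pi_\infty=0\}$ is a tail event, the Kolmogorov $0$--$1$ law makes it null, whence $\tilde\Pi_\infty>0$ a.e.\ and $R_\infty=c_\infty^2\,\tilde\Pi_\infty^2\in(0,\infty)$ a.e., which is the claim. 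Together with the $L^1$-convergence of $\prod_{k=1}^n|P_{j_k}(z^{l_k})|$ to $\sqrt{d\mu/dz}$ recalled above, this also shows $\mu$ is equivalent to Lebesgue measure.

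The hard part is that the $Q_k$ are only \emph{dissociated}, not independent, while the Kakutani step manipulates the square roots $|P_{j_k}(z^{l_k})|$, which are not trigonometric polynomials, so dissociation does not make the relevant integrals factor exactly. The remedy, which is the technical core, is to take each $l_k$ in the second step so large---relative to the degrees and to the oscillation already accumulated in $\Pi_{k-1}$---that the cross integrals $\int_{S^1}\tilde\Pi_n\,|P_{j_{n+1}}(z^{l_{n+1}})|\,dz$ factor up to errors $\eps_n$ with $\sum_n\eps_n<\infty$; by the equidistribution of $z\mapsto z^{l_{n+1}}$ (Riemann--Lebesgue applied to the high-frequency factor) such a choice is available, and the summable errors let the martingale and $0$--$1$-law conclusions persist. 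This approximate-independence bookkeeping is precisely the machinery of \cite{Abd-Nad} and \cite{Abd-nad3}, which I would cite to finish.
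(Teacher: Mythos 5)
Your overall skeleton (extract a quantitatively flat subsequence, then choose the $l_k$ by the dissociation lemma) is the right one, and your first step is fine: boundedness in $L^2$ plus a.e.\ convergence does give $\|P_j\|_1\to 1$, and a subsequence with summable defect exists. But the two steps that carry the actual content of the lemma both rest on probabilistic structure that dissociation does not provide. First, the partial products $R_n=\prod_{k\le n}|P_{j_k}(z^{l_k})|^2$ are not a martingale: dissociation yields only the stabilization of Fourier coefficients (the $m$-th coefficient of $R_{n+1}$ equals that of $R_n$ whenever it is already nonzero), not a filtration of $\sigma$-algebras $(\mathcal{F}_n)$ with $\mathbb{E}[R_{n+1}\mid\mathcal{F}_n]=R_n$; so the martingale convergence theorem cannot be invoked for a.e.\ finiteness. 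Second, and more seriously, your positivity step uses Kakutani's dichotomy and the Kolmogorov $0$--$1$ law, both of which require genuine independence; for merely dissociated factors tail events need not be trivial, and $L^1$-convergence of $\tilde\Pi_n$ with $\int\tilde\Pi_\infty\,dz=1$ is perfectly compatible with $\tilde\Pi_\infty$ vanishing on a set of positive measure. You acknowledge this and propose to repair it by choosing $l_k$ so that the cross-integrals of the non-polynomial functions $|P_{j_k}(z^{l_k})|$ approximately factor with summable errors, but you then defer exactly this ``technical core'' to the references; since that is the whole difficulty, the key step of the lemma remains unproved.

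The gap is avoidable, because the result has a much more direct proof (essentially the one behind the citation \cite{Abd-nad3}), in which finiteness and positivity come simultaneously. Since $|P_j|^2\to 1$ a.e.\ and $\int_{S^1}|P_j|^2\,dz=1$, Scheff\'e's lemma gives $\big\||P_j|^2-1\big\|_1\to 0$ (alternatively, combine your own estimate $\big\||P_j|-1\big\|_2\to 0$ with Cauchy--Schwarz). Extract a subsequence with $\sum_k\big\||P_{j_k}|^2-1\big\|_1<\infty$, and then choose $l_1<l_2<\cdots$ by the dissociation lemma. The crucial observation is that $z\mapsto z^{l}$ preserves $dz$, so $\big\||P_{j_k}(z^{l_k})|^2-1\big\|_1=\big\||P_{j_k}|^2-1\big\|_1$ no matter how the $l_k$ are chosen; hence $\int_{S^1}\sum_k\big||P_{j_k}(z^{l_k})|^2-1\big|\,dz<\infty$, so $\sum_k\big||P_{j_k}(z^{l_k})|^2-1\big|<\infty$ for a.e.\ $z$. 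At every such $z$ the infinite product converges absolutely to a finite limit, and that limit can vanish only if one of the factors vanishes, i.e.\ only on the countable union of the finite zero sets of the polynomials $P_{j_k}(z^{l_k})$, which is a null set. No martingale, no Kakutani dichotomy, and no zero--one law is needed; independence plays no role.
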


The following theorem, derived in \cite{Abd-nad3}, will be used here.

\begin{lem}[Theorem in \cite{Abd-nad3}]\label{lem7} Let 
$\mu=\prod_{k=1}^\infty |P_{j}(z)|^2$ be a generalized Riesz product, then the Mahler measure of $\mu$ is given by

$$M(\mu)=\exp\Big(\int \log\big(\frac{d\mu}{dz}\big) dz)=
\prod_{k=1}^\infty M(P_{j})^2.$$
\end{lem}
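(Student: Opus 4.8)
The plan is to linearise the infinite product by taking logarithms, reduce the identity to the multiplicativity of the Mahler measure on finite products, and then pass to the limit, controlling the logarithmic integral from above by Jensen's inequality and from below by Fatou's lemma. Throughout I write $w_n:=\prod_{k=1}^{n}|P_k|^2$ for the density of the $n$-th partial product and $w:=\tfrac{d\mu}{dz}$.

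First I would settle the finite case. Each defining polynomial satisfies $\|P_k\|_2=1$, so Jensen's inequality gives $M(P_k)=\exp\big(\int_{S^1}\log|P_k|\,dz\big)\le\|P_k\|_2=1$. Since $\log$ carries the product $w_n$ into a finite sum,
\[
\int_{S^1}\log w_n\,dz=\sum_{k=1}^{n}\int_{S^1}\log|P_k|^2\,dz=\sum_{k=1}^{n}2\log M(P_k)=\log\prod_{k=1}^{n}M(P_k)^2 ,
\]
and as each factor lies in $(0,1]$ these quantities decrease to $\log L$, where $L:=\prod_{k=1}^{\infty}M(P_k)^2\in[0,1]$. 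I would also recall from \cite{Abd-Nad} that $\prod_{k=1}^{n}|P_k|\to\sqrt{w}$ in $L^1$, so that, along a subsequence, $w_n\to w$ almost everywhere and hence $\log w_n\to\log w$ almost everywhere.

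For the upper bound I would factor $w=w_n\,r_n$ almost everywhere, where $r_n:=\prod_{k>n}|P_k|^2$ is the density of the tail Riesz product. By dissociation each finite tail integrates to $1$, so Fatou gives $\int_{S^1}r_n\,dz\le1$, and Jensen then yields $\int_{S^1}\log r_n\,dz\le0$. Consequently
\[
\int_{S^1}\log w\,dz=\int_{S^1}\log w_n\,dz+\int_{S^1}\log r_n\,dz\le\int_{S^1}\log w_n\,dz\longrightarrow\log L ,
\]
so $M(\mu)\le L$. In particular, if $L=0$ the identity already holds, and it remains only to prove $M(\mu)\ge L$ when $L>0$. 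For this I would apply Fatou's lemma to the nonnegative functions $w_n-\log w_n$; using $\int_{S^1}w_n\,dz=1$ and $\int_{S^1}\log w_n\,dz\to\log L$ one obtains $\int_{S^1}(w-\log w)\,dz\le1-\log L$, and since absolute continuity of $\mu$ forces $\int_{S^1}w\,dz=1$ this rearranges to $\int_{S^1}\log w\,dz\ge\log L$. Combining the two bounds gives $\int_{S^1}\log w\,dz=\log L$, that is, $M(\mu)=\prod_{k=1}^{\infty}M(P_k)^2$.

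The step I expect to be the main obstacle is precisely this lower bound, namely justifying that no mass escapes into a singular part so that $\int_{S^1}w\,dz=1$; equivalently, that the integral may be interchanged with the infinite product. This is exactly where the purity (absolute continuity) of the Riesz product is needed, which in the situations where the lemma is applied is guaranteed by the hypothesis that $d\mu/dz$ is finite and strictly positive almost everywhere. The upper bound, by contrast, is unconditional and already pins down the singular case $L=0$.
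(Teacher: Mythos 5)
Your finite-product computation and your upper bound are sound: the factorization $\tfrac{d\mu}{dz}=w_n\,r_n$ a.e.\ (which holds because $w_n$ is continuous and $\prod_{k=n+1}^{N}|P_k|^2\,dz\to\mu_n$ weakly, so that $\mu=w_n\,\mu_n$ as measures), together with $\int_{S^1} r_n\,dz\le 1$ and Jensen, gives $M(\mu)\le\prod_{k}M(P_k)^2$ unconditionally, and this settles the case $L=0$. Note for the record that the paper itself contains no proof of this lemma at all; it is imported verbatim from \cite{Abd-nad3}, so your attempt has to stand entirely on its own.

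The genuine gap is the lower bound. Your Fatou argument applied to $w_n-\log w_n$ only yields $\int_{S^1}\log w\,dz\ \ge\ \log L-\bigl(1-\int_{S^1} w\,dz\bigr)$, and the deficit $1-\int_{S^1} w\,dz$ is exactly the mass of the singular part of $\mu$. You close it by asserting $\int_{S^1} w\,dz=1$, i.e.\ that $\mu$ is absolutely continuous, and you justify this by saying that the density being finite and strictly positive a.e.\ guarantees absolute continuity. That implication is false: the measure $\tfrac12\,dz+\tfrac12\,\delta_{1}$ has density $\tfrac12$, finite and positive everywhere, yet is not absolutely continuous. The issue is not cosmetic here: generalized Riesz products are typically singular or of unknown type (purity is open in this generality), the lemma is stated with no positivity or absolute-continuity hypothesis whatsoever, and the paper itself records only that a.e.\ positivity of the infinite product gives a \emph{part} equivalent to Lebesgue measure, not that $\mu\ll dz$. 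So your argument proves the identity only in the two extreme cases ($L=0$, or $\mu$ absolutely continuous), which is strictly weaker than the statement. The lower bound can be obtained without any such assumption via semicontinuity under weak convergence: writing $\lambda$ for Lebesgue measure, one has $-\int_{S^1}\log\frac{d\nu}{d\lambda}\,d\lambda=D(\lambda\,\|\,\nu)=\sup_{f\in C(S^1)}\bigl[\int f\,d\lambda-\log\int e^{f}\,d\nu\bigr]$, a supremum of weakly continuous functionals of $\nu$, hence lower semicontinuous; applying this with $\nu_N=w_N\,dz\to\mu$ gives $\int_{S^1}\log w\,dz\ \ge\ \limsup_N\int_{S^1}\log w_N\,dz=\log L$ directly, singular part or not, which combined with your upper bound finishes the proof.
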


\begin{proof}[\textbf{Proof of Theorem \ref{TH2}.}] We begin by proving that the sequence  $(P_m)$ of Gauss-Frenesl polynomials are $L^1$-flat. Indeed, write
$$\Big\|\big|P_m\big|-1\Big\|_2^2=
2-\int  \big|P_m(z)\big| dz,$$
and apply Lemma \ref{lemN2} to see that 
$$ \Big\|\big|P_m \big|-1\Big\|_2 \tend{n}{+\infty}0.$$
We can thus extract a subsequence $(P_{m_n})_{n \geq 1}$ which is almost everywhere flat, that is, for almost all $z$ with respect to $dz$, 
$$\big|P_{m_n}(z) \big|  \tend{n}{+\infty}1.$$
We can thus apply Lemma \ref{th7} and Lemma \ref{lem7} to conclude that there exists a subsequence, which we still denote by $(P_{m_n})$, for which
 $$M(P_{m_n})\tend{n}{+\infty}1.$$
Hence, the theorem is proven.
 
\end{proof}
\begin{rem} Our proof gives that for any ultraflat polynomails, there is a subsequence for which the Mahler measure converge to 1.
\end{rem}

\begin{thank}
The author would like to thank Mahandra Nadkarni, Tam\'{a}s Erd\'{e}lyi, and H. Queff\'{e}lec for some exchange of emails on the subjects. He also wishes to express his posthumous gratitude to Fran\c{c}ois Parreau for several discussions on flat and \( c \)-flat polynomials, with \( c \in [0,1] \). He would also like to thank the University of Luxembourg and the organizers of the international conference \emph{``33\`emes Journ\'{e}es Arithm\'{e}tiques''} for the invitation, during which the results of this paper were announced. He further expresses his sincere thanks to Ahmad El-Guindy for bringing the reference~\cite{F} to his attention.
\end{thank}

\end{document}